\newtheorem{Theorem}{Theorem}[section]
\newtheorem{Lemma}[Theorem]{Lemma}
\theoremstyle{definition}
\theoremstyle{remark}
\newtheorem{Remark}[Theorem]{Remark}
\def\@thmcountersep{-}
\numberwithin{equation}{section}
\begin{document} 

\title[]{On calculations of the twisted Alexander ideals for spatial graphs, handlebody-knots and surface-links}

\author{Atsushi Ishii}
\address{Institute of Mathematics, University of Tsukuba, 1-1-1 Tennodai, Tsukuba, Ibaraki 305-8571, Japan }
\email{aishii@math.tsukuba.ac.jp}
\thanks{The first author was supported by JSPS KAKENHI Grant Number 24740037.}

\author{Ryo Nikkuni}
\address{Department of Mathematics, School of Arts and Sciences, Tokyo Woman's Christian University, 2-6-1 Zempukuji, Suginami-ku, Tokyo 167-8585, Japan}
\email{nick@lab.twcu.ac.jp}
\thanks{The second author was supported by JSPS KAKENHI Grant Number 24540094.}

\author{Kanako Oshiro}
\address{Deparment of Information and Communication Sciences, Sophia University, 7-1 Kioicho, Chiyoda-ku, Tokyo 102-8554, Japan}
\email{oshirok@sophia.ac.jp}
\thanks{The third author was supported by JSPS KAKENHI Grant Number 25800052.}

\subjclass{Primary 57M05; Secondary 57M15, 57Q45}

\date{}


\keywords{Spatial graph, Handlebody-knot, Surface-link, twisted Alexander ideal}

\begin{abstract}
There are many studies about twisted Alexander invariants for knots and links, but calculations of twisted Alexander invariants for spatial graphs, handlebody-knots, and surface-links have not been demonstrated well. In this paper, we give some remarks to calculate the twisted Alexander ideals for spatial graphs, handlebody-knots and surface-links, and observe their behaviors. For spatial graphs, we calculate the invariants of Suzuki's theta-curves and show that the invariants are nontrivial for Suzuki's theta-curves whose Alexander ideals are trivial. For handlebody-knots, we give a remark on abelianizations and calculate the invariant of the handlebody-knots up to six crossings. For surface-links, we correct Yoshikawa's table and calculate the invariants of the surface-links in the table.

\end{abstract}

\maketitle

\section{Introduction}\label{sect:introduction}

The Alexander ideal is a knot invariant derived from the fundamental group of the exterior of a knot with an abelianization, which can be specified by the meridian of the knot. The twisted Alexander ideal is a generalization of the Alexander ideal, which is derived from the fundamental group, an abelianization and a group representation. There are two versions of the twisted Alexander invariant introduced by Lin \cite{Lin01} and Wada \cite{Wada94}. In this paper, we follow Wada's version. The Alexander ideal can be defined not only for a knot but also for a finitely presentable group with an abelianization. The twisted Alexander ideal can also be defined for a finitely presentable group with an abelianization and a group representation. Since calculations of twisted Alexander ideals have not been demonstrated well except for knots, we give some remarks to calculate the twisted Alexander ideals for spatial graphs, handlebody-knots and surface-links, and observe their behaviors. 

For spatial graphs, we focus on Suzuki's theta-$n$ curve $\Theta_n$ as illustrated in Fig. \ref{Suzuki_theta_n} where $n$ is a positive integer satisfying $n\ge 3$. Alexander ideals for $\Theta_n$ were calculated in \cite{Sato10,Suzuki84} and as we will demonstrate later, it is trivial if $n\equiv1,5\pmod{6}$. In this case, we define a family of group representations and give a formula of the twisted Alexander ideals for $\Theta_n$. It follows from the formula that Suzuki's theta-$n$ curve is nontrivial if $n\equiv1,5\pmod{6}$, although it can not be shown by the Alexander ideal.

For handlebody-knots, we focus on the handlebody-knots in the table \cite{IshiiKishimotoMoriuchiSuzuki12} of genus two handlebody-knots up to six crossings. Since a meridian system of a handlebody-knot is not unique, a group representation and an abelianization can not be specified via meridian systems. Then we sum up the twisted Alexander ideals over representations and abelianizations, and obtain an invariant of matrix form. We confirm that the twisted Alexander ideal works better than the number of representations as expected.

For surface-links, we focus on the surface-links in Yoshikawa's table \cite{Yoshikawa94}, where each surface-link is represented by a ch-diagram. We correct three calculations in his table and calculate the twisted Alexander ideals for surface-links in the table. We also remark that there exists a pair of two surface-links such that their twisted Alexander ideals differ but that their Alexander ideals coincide. 

This paper is organized as follows. In Section 2, we review the twisted Alexander ideal for a finitely presentable group $G$ associated with an epimorphism from $G$ to an abelian group and a group representation from $G$ to a matrix group. In Section 3, we give a formula of the twisted Alexander ideal for Suzuki's theta-curves. In Section 4, we introduce an invariant of matrix form and give a table of the invariants for genus two handlebody-knots up to six crossings. In Section 5, we calculate the twisted Alexander ideal for surface-links in Yoshikawa's table and correct three calculations in his table. In this paper, we denote by $\mathbb{Z}_p$ the cyclic group of order $p$, namely $\mathbb{Z}_p=\mathbb{Z}/p\mathbb{Z}$.

\section{Twisted Alexander ideals}\label{sect:}

In this section, we give a brief review of the twisted Alexander ideals for finitely presentable groups. Let $P$ be a commutative ring with unity $1$. For two matrices $A,B$ over $P$, we say that $A$ and $B$ are {\it elementarily equivalent} if they are transformed into each other by a finite sequence of the {\it elementary operations}: 
\begin{enumerate}
\item Permuting rows and columns, 
\item Adjoining a row of zeros; $M\to\begin{pmatrix} M \\ \boldsymbol{0} \end{pmatrix}$, 
\item Adding to some row a linear combination of other rows, 
\item Adding to some column a linear combination of other columns, 
\item Adding a new row and a new column such that the entry in the intersection of the new row and new column is $1$, and the all of the remaining entries in the new column are zero; $M\to\begin{pmatrix} M & \boldsymbol{0} \\ * & 1 \end{pmatrix}$, 
\item The inverses of (1), (2), (3), (4) and (5). 
\end{enumerate}
Then we denote by $A\sim B$. Note that the multiplication of a row and column by a unit of $P$ can be realized by elementary operations. 
Let $A$ be a $\infty\times s$ matrix over $P$ such that only the first $t$ rows contain nonzero entries. Then the \textit{$d$-th elementary ideal} $E_d\left(A\right)$ of $A$ is defined by the ideal of $P$ generated by all $(s-d)$-minors of $A$ if $0< s-d \le t$, $(0)$ if $s-d>t$ and $(1) = P$ if $s-d\le 0$. It is known that elementarily equivalent two matrices over $P$ have the same sequence of elementary ideals \cite{CrowellFox77}. 

For a group $G$ and a ring $R$, we denote the group ring of $G$ over $R$ by $RG$. Let $F_{s}$ be the free group with rank $s$ generated by $x_{1},x_{2},\ldots,x_{s}$. The {\it Fox's free derivative} \cite{Fox53} with respect to $x_i$ is an additive map 
\begin{eqnarray*}
\frac{\partial}{\partial x_{i}}:\mathbb{Z}F_{s}\to\mathbb{Z}F_{s}
\end{eqnarray*} 
which satisfies 
\begin{eqnarray*}
\frac{\partial x_j}{\partial x_i} = 
\begin{cases}
e & (i=j) \\
0 & (i\neq j)
\end{cases},\ \ 
\frac{\partial (uv)}{\partial x_i}
&=&\frac{\partial u}{\partial x_i}+u\frac{\partial v}{\partial x_i}
\end{eqnarray*}
for $u,v\in F_{s}$. In particular, the map satisfies 
\begin{eqnarray*}
\frac{\partial x_{i}^{p}}{\partial x_i}
= 
\begin{cases}
e+x_{i}+x_{i}^{2}+\cdots+x_{i}^{p-1} & (p>0) \\
0 & (p=0) \\
-x_{i}^{-1}-x_{i}^{-2}-\cdots-x_{i}^{p} & (p<0). 
\end{cases}
\end{eqnarray*}

For $r_{1},r_{2},\ldots,r_{t}\in F_{s}$, let 
\begin{eqnarray*}
G = \langle x_{1},x_{2},\ldots,x_{s}\ |\ r_{1},r_{2},\ldots,r_{t}\rangle
\end{eqnarray*}
be a finitely presented group with $s$ generators and $t$ relators. Let $\phi:F_{s}\to G$ be the canonical epimorphism and $\alpha$ an epimorphism from $G$ to an abelian group 
\begin{eqnarray*}
G_0=\big\langle t_1,\ldots,t_r\ |\ t_1^{k_1},\ldots,t_r^{k_r},\ \left[t_{i},t_{j}\right]\ (1\le i<j\le r)\big\rangle
\end{eqnarray*}
 for some non-negative integers $k_1,\ldots,k_r$, where $\left[t_{i},t_{j}\right]$ denotes the commutator of $t_{i}$ and $t_{j}$. Then for a ring $R$, the group ring $RG_{0}$ may be identified with the quotient ring of Laurent polynomial ring ${R\left[t_1^{\pm1},\ldots,t_r^{\pm1}\right]}/{\big(t_1^{k_1}-1,\ldots,t_r^{k_r}-1\big)}$. We denote by $\tilde{\phi}$ and $\tilde{\alpha}$ the linear extensions of $\phi$ and $\alpha$ from $\mathbb{Z}F_{s}$ to $\mathbb{Z}G$ and from $\mathbb{Z}G$ to $\mathbb{Z}G_0$, respectively. 
Then we call the $\infty\times s$ matrix 
\begin{eqnarray*}
A\left(G,\tilde{\alpha}\right) = 
\left(
       \begin{array}{c}
       \tilde{\alpha}\circ\tilde{\phi}\left({\displaystyle \frac{\partial r_i}{\partial x_j}}\right) \\
       O 
       \end{array}
       \right)
\end{eqnarray*}
each of whose entries belongs to ${{\mathbb Z}\left[t_1^{\pm1},\ldots,t_r^{\pm1}\right]}/{\big(t_1^{k_1}-1,\ldots,t_r^{k_r}-1\big)}$ the {\it Alexander matrix of $G$ associated with $\alpha$}, where $O$ stands for $\infty\times s$ zero matrix. We call the $d$-th elementary ideal $E_d\left(A\left(G,\tilde{\alpha}\right)\right)$ of $A\left(G,\tilde{\alpha}\right)$ the {\it $d$-th Alexander ideal of $G$ associated with $\alpha$}. 

Moreover, let $GL\left(n;R\right)$ be the general linear group of degree $n$ over a ring $R$ and $\rho:G\to GL(n;R)$ a group representation. We denote by $\tilde{\rho}$ the linear extension of $\rho$ from $\mathbb{Z}G$ to the matrix ring $M_n\left(R\right) = M\left(n,n;R\right)$. Then the tensor product homomorphism 
\begin{eqnarray*}
\tilde{\rho}\otimes\tilde{\alpha}:\mathbb{Z}G\to 
M_n\big({R\left[t_1^{\pm1},\ldots,t_r^{\pm1}\right]}/{\big(t_1^{k_1}-1,\ldots,t_r^{k_r}-1\big)}\big)
\end{eqnarray*}
is defined by 
\begin{eqnarray*}
\left(\tilde{\rho}\otimes\tilde{\alpha}\right)\left(\sum r_{i}g_{i}\right)
= \sum r_{i}{\alpha}\left(g_{i}\right){\rho}\left(g_{i}\right)\ (r_{i}\in {\mathbb Z},\ g_{i}\in G). 
\end{eqnarray*}
Then we call the $\infty\times s$ matrix 
\begin{eqnarray*}
A\left(G,\tilde{\rho}\otimes\tilde{\alpha}\right)
= \left(
       \begin{array}{c}
       (\tilde{\rho}\otimes\tilde{\alpha})\circ\tilde{\phi}\left({\displaystyle \frac{\partial r_i}{\partial x_j}}\right) \\
       O 
       \end{array}
       \right)
\end{eqnarray*}
each of whose entries belongs to $M_{n}\big({R\left[t_1^{\pm1},\ldots,t_r^{\pm1}\right]}/{\big(t_1^{k_1}-1,\ldots,t_r^{k_r}-1\big)}\big)$ the {\it twisted Alexander matrix of $G$ associated with $\alpha$ and $\rho$}, where $O$ stands for $\infty\times s$ matrix consisting entirely of $n\times n$ zero matrix. We regard $A\left(G,\tilde{\rho}\otimes\tilde{\alpha}\right)$ as $\infty\times ns$ matrix in $M\big(nt,ns;{R\left[t_1^{\pm1},\ldots,t_r^{\pm1}\right]}/{\big(t_1^{k_1}-1,\ldots,t_r^{k_r}-1\big)}\big)$ where only the first $nt$ rows contain nonzero entries. Then we call the $d$-th elementary ideal $E_d\left(A\left(G,\tilde{\rho}\otimes\tilde{\alpha}\right)\right)$ of $A\left(G,\tilde{\rho}\otimes\tilde{\alpha}\right)$ the \textit{$d$-th twisted Alexander ideal of $G$ associated with $\rho$ and $\alpha$}. 

Since any two presentations of $G$ are related by Tietze transformations and these transformations induce a sequence of elementary operations, it follows that the Alexander matrix $A\left(G,\tilde{\alpha}\right)$ of $G$ associated with $\alpha$ does not depend on the choice of a presentation of $G$. Namely we have the following. 

\begin{Theorem}\label{alex_id}
The sequence of Alexander ideals of $G$ associated with $\alpha$ does not depend on the choice of a presentation of $G$. \hfill $\square$ 
\end{Theorem}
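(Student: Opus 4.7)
My plan is to reduce the statement to the well-known fact that any two finite presentations of a group are related by a finite chain of Tietze transformations, and to verify that each Tietze transformation induces a sequence of elementary operations on the Alexander matrix. Since the excerpt has already recorded that elementarily equivalent matrices share the same sequence of elementary ideals (the Crowell--Fox result), the theorem follows once the Tietze-to-elementary correspondence is established.

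First, I would fix a presentation $G=\langle x_1,\ldots,x_s\mid r_1,\ldots,r_t\rangle$ and write out the four Tietze moves explicitly: (T1) adjoining a relator $r_{t+1}$ which already lies in the normal closure of $r_1,\ldots,r_t$, (T2) its inverse, (T3) adjoining a new generator $x_{s+1}$ together with a relator of the form $x_{s+1}w^{-1}$ where $w\in F_s$, and (T4) its inverse. The key computation is to differentiate the newly introduced relators using Fox calculus. For (T1), if $r_{t+1}=\prod u_i r_{\epsilon(i)}^{\pm1} u_i^{-1}$ in $F_s$, then the Leibniz rule gives
\begin{equation*}
\frac{\partial r_{t+1}}{\partial x_j}=\sum_i \pm u_i\frac{\partial r_{\epsilon(i)}^{\pm1}}{\partial x_j}+(\text{terms killed by }\tilde\phi),
\end{equation*}
so after applying $\tilde\alpha\circ\tilde\phi$ the new row is a $\mathbb{Z}G_0$-linear combination of the existing rows; this is exactly elementary operation (3) (plus (2) if one first adjoins a zero row).

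For (T3), the new generator $x_{s+1}$ adds a column and the new relator $r_{t+1}=x_{s+1}w^{-1}$ adds a row, with
\begin{equation*}
\frac{\partial r_{t+1}}{\partial x_{s+1}}=e,\qquad \frac{\partial r_{t+1}}{\partial x_j}=-x_{s+1}w^{-1}\frac{\partial w}{\partial x_j}\quad (j\le s).
\end{equation*}
After $\tilde\alpha\circ\tilde\phi$, the entry in the new row and new column is $1$ and the remainder of the new column is $0$, so the change to the Alexander matrix is precisely elementary operation (5). The moves (T2) and (T4) are handled by the inverses, which are covered by operation (6).

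Stringing these observations together, any Tietze transformation changes $A(G,\tilde\alpha)$ to an elementarily equivalent matrix, so two presentations of $G$ yield elementarily equivalent Alexander matrices, hence the same sequence of elementary ideals. The one place where I would be careful is in bookkeeping for (T3): I need to make sure that the appearance of $w$ in the other entries of the new row (via $\partial w/\partial x_j$) does not obstruct the identification with operation (5). This is resolved by first using (3) to clear those entries after placing the $1$, or equivalently by observing that operation (5) as stated in the excerpt allows the entries below the pivot to be arbitrary $*$'s, which is exactly the situation produced by Fox differentiation of $x_{s+1}w^{-1}$.
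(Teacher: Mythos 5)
Your proposal is correct and follows exactly the route the paper takes: the paper's entire ``proof'' is the one-sentence remark preceding the theorem, that Tietze transformations induce elementary operations on the Alexander matrix, and you have simply filled in the Fox-calculus details (the cancellation of the $\partial u_i/\partial x_j$ terms under $\tilde\phi$ for relator moves, and the identification of the generator move with operation (5) via the arbitrary $*$ entries). No discrepancy to report.
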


In a similar way, the $d$-th twisted Alexander matrix of $G$ associated with $\rho$ and $\alpha$ does not depend on the choice of a presentation of $G$. In addition, let $\rho':G\to GL\left(n;R\right)$ be a group representation which is conjugate to $\rho$, that is, there exists $B\in GL\left(n;R\right)$ such that $\rho'(x)=B\rho(x)B^{-1}$ for any $x\in G$. Then it is not hard to see that the twisted Alexander matrix $A\left(G,\tilde{\rho}'\otimes\tilde{\alpha}\right)$ of $G$ associated with $\alpha$ and $\rho'$ is elementarily equivalent to the twisted Alexander matrix $A\left(G,\tilde{\rho}\otimes\tilde{\alpha}\right)$ of $G$ associated with $\alpha$ and $\rho$. Therefore we have the following. 

\begin{Theorem}\label{tw_alex_id}
The sequence of twisted Alexander ideals of $G$ associated with $\alpha$ and $\rho$ does not depend on the choice of a presentation of $G$ and a representative element in the conjugacy class of $\rho$. \hfill $\square$ 
\end{Theorem}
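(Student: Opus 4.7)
The statement combines two independence assertions---invariance under change of presentation of $G$, and invariance under replacing $\rho$ by a conjugate representation---so the plan is to verify each separately, adapting the argument of the preceding theorem to account for the $n\times n$ block structure of the twisted Alexander matrix.

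To establish presentation independence, I would check that each Tietze transformation on $\langle x_1,\ldots,x_s \mid r_1,\ldots,r_t\rangle$ induces a sequence of elementary operations on $A(G,\tilde{\rho}\otimes\tilde{\alpha})$. When a relator $r_{t+1}$ that is a consequence of $r_1,\ldots,r_t$ is adjoined, the Fox calculus identity
\[
\frac{\partial r_{t+1}}{\partial x_j}=\sum_{i}a_{i,j}\,\frac{\partial r_i}{\partial x_j}\qquad (a_{i,j}\in\mathbb{Z}F_s)
\]
pushes forward under $(\tilde{\rho}\otimes\tilde{\alpha})\circ\tilde{\phi}$ to express the new block-row as an $n\times n$ matrix-coefficient combination of existing block-rows, which is realized by finitely many row operations of type (3) applied across the appropriate blocks of $n$ rows. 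When a new generator $x_{s+1}$ is adjoined together with a defining relator $r_{t+1}=x_{s+1}w^{-1}$, the Fox derivatives yield an $I_n$ block in the new diagonal position with zero blocks above, matching $n$ iterations of operation (5).

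For conjugacy invariance, I would start from the identity
\[
(\tilde{\rho}'\otimes\tilde{\alpha})(u)=B\,(\tilde{\rho}\otimes\tilde{\alpha})(u)\,B^{-1}\qquad (u\in\mathbb{Z}G),
\]
valid because the coefficients $\alpha(g_i)$ lie in the central coefficient ring. Applying this block by block yields
\[
A(G,\tilde{\rho}'\otimes\tilde{\alpha})=D_t\cdot A(G,\tilde{\rho}\otimes\tilde{\alpha})\cdot D_s^{-1},
\]
where $D_m=\mathrm{diag}(B,\ldots,B)$ is the block-diagonal matrix with $m$ copies of $B$ on the diagonal. The remaining task is to realize this two-sided multiplication as a finite sequence of elementary operations.

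This last step is the main obstacle. The given scalar elementary operations realize only the subgroup $E_{nm}(R)\subset GL_{nm}(R)$ generated by elementary matrices, while $D_m$ need not lie in $E_{nm}(R)$ for arbitrary $R$. My plan is to use operation (5) to adjoin an auxiliary $n\times n$ identity block, then invoke Whitehead's lemma---which states that $\mathrm{diag}(B,B^{-1})$ always lies in $E_{2n}(R)$---to realize both the left multiplication by $D_t$ and the right multiplication by $D_s^{-1}$ by elementary row and column operations on the enlarged matrix, and finally apply the inverse of operation (5) to remove the auxiliary block. Everything else is Fox-calculus bookkeeping parallel to the untwisted case.
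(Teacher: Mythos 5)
Your strategy coincides with the paper's, which in fact supplies no details: it asserts that Tietze transformations induce elementary operations ``in a similar way'' to the untwisted case and that conjugate representations yield elementarily equivalent matrices because ``it is not hard to see''. Your identification of Whitehead's lemma as the tool needed to realize the block conjugation $A\mapsto D_t\,A\,D_s^{-1}$ by the listed scalar elementary operations is the right way to substantiate that claim; the row half is indeed unproblematic, since the matrix already carries infinitely many zero rows to pair each nonzero block of $n$ rows against, so that $\mathrm{diag}(B,B^{-1})$ acts by $B$ on the nonzero block and harmlessly by $B^{-1}$ on a zero block.

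Two steps of your plan need repair, though neither is fatal. First, the identity $\partial r_{t+1}/\partial x_j=\sum_i a_{i,j}\,\partial r_i/\partial x_j$ is not correct as stated: writing $r_{t+1}$ as a product of conjugates $w_k r_{i_k}^{\pm1}w_k^{-1}$, the Fox derivative produces extra terms involving $\partial w_k/\partial x_j$ that vanish only after applying $\tilde{\phi}$ (because $\phi(r_i)=e$), and the resulting coefficients must be independent of $j$ --- one needs $\tilde{\phi}(\partial r_{t+1}/\partial x_j)=\sum_i\tilde{\phi}(a_i)\,\tilde{\phi}(\partial r_i/\partial x_j)$ with $a_i$ depending only on $i$, since otherwise the combination is not a row operation of type (3). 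Second, your final step fails as written: after you use Whitehead's lemma to right-multiply each column block, paired with the auxiliary block, by $\mathrm{diag}(B^{-1},B)$, the auxiliary $n\times n$ corner ends up equal to $B^{s}$ (or to $B$ if you adjoin one auxiliary block per column block), not to the identity, so the inverse of operation (5) does not apply. The patch is one more round of the same trick on rows: adjoin $n$ zero rows by operation (2), use Whitehead's lemma to left-multiply the auxiliary rows by $B^{-s}$ while the paired zero rows remain zero, discard the zero rows, and only then peel off the resulting identity block by the inverse of operation (5).
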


In particular for $t=s-1$ and $G_{0}=\langle t_{1}\ |\ \emptyset\rangle$, it is known that the first Alexander ideal $E_{1}\left(A\left(G,\tilde{\alpha}\right)\right)$ is always principal and its generator is called the {\it Alexander polynomial} of $G$ associated with $\alpha$ \cite{CrowellFox77}. Moreover, a specific generator of $E_{n}\left(A\left(G,\tilde{\rho}\otimes\tilde{\alpha}\right)\right)$ produces the {\it twisted Alexander polynomial} of $G$ associated with $\alpha$ and $\rho$. We refer the reader to \cite{Wada94,Lin01,FriedlVidussi11} for the precise definition of the twisted Alexander polynomial.

\section{Spatial graphs}

Let $\Gamma$ be a finite and labeled graph embedded in the $3$-sphere $S^{3}$. Then $\Gamma$ is called a \textit{spatial graph}. Two spatial graphs are said to be \textit{equivalent} if there exists an orientation-preserving self-homeomorphism on $S^{3}$ which sends one to the other as labeled graphs. A spatial graph $\Gamma$ is said to be \textit{trivial} if there exists a spatial graph $\Gamma'$ contained in a $2$-sphere in $S^{3}$ such that $\Gamma$ is equivalent to $\Gamma'$. For a spatial graph $\Gamma$, the fundamental group of the spatial graph complement $G_{\Gamma}=\pi_{1}\left(S^{3}-\Gamma\right)$ is finitely presentable and we can obtain a group presentation of deficiency $1-\beta_{0}\left(\Gamma\right)+\beta_{1}\left(\Gamma\right)$, where $\beta_{i}\left(\Gamma\right)$ denotes the $i$-th Betti number of $\Gamma$ \cite{Kinoshita72}. In particular, if $\Gamma$ is trivial, then $G_{\Gamma}$ is isomorphic to the free group of rank $\beta_{1}\left(\Gamma\right)$. Let $l$ be a $1$-dimensional cycle (in the sense of homology) with integral coefficients on $\Gamma$. Then we define a homomorphism $\alpha_{l}$ from $G_{\Gamma}$ to $\left\langle t\,|\,\emptyset\right\rangle$ by $\alpha_{l}\left(g\right)=t^{\mathrm{lk}\left(g,l\right)}$ for any element $g$ in $G_{\Gamma}$, where $\mathrm{lk}$ denotes the linking number in $S^{3}$.
Then the collection of the Alexander ideals of $G_{\Gamma}$ associated with $\alpha_{l}$ is an invariant of $\Gamma$. Moreover, let $\rho$ be a group representation from $G_{\Gamma}$ to $SL\left(n;R\right)$. Then the collection of the twisted Alexander ideals of $G_{\Gamma}$ associated with $\alpha_{l}$ and all possible $\rho$ is also an invariant of $\Gamma$. In particular the following holds by the definition of the elementary ideals. 

\begin{Lemma}\label{trivia}
If $\Gamma$ is trivial, then it follows that
\begin{eqnarray}\label{trivial}
E_{d}\left(A\left(G_{\Gamma},\tilde{\alpha}_{l}\right)\right)
=\begin{cases}
\left(0\right) & (d<\beta_{1}\left(\Gamma\right)) \\
\left(1\right) & (d\ge\beta_{1}\left(\Gamma\right))
\end{cases},
\end{eqnarray}
\begin{eqnarray}\label{trivial2}
E_{d}\left(A\left(G_{\Gamma},\tilde{\rho}\otimes\tilde{\alpha}_{l}\right)\right)
=\begin{cases}
\left(0\right) & (d<n\beta_{1}\left(\Gamma\right)) \\
\left(1\right) & (d\ge n\beta_{1}\left(\Gamma\right))
\end{cases}
\end{eqnarray}
for any $l$ and $\rho$. \hfill $\square$
\end{Lemma}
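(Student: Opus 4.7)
The plan is to exploit the fact, already noted in the paragraph preceding the lemma, that the fundamental group of a trivial spatial graph is free of rank $\beta_1(\Gamma)$. By Theorems \ref{alex_id} and \ref{tw_alex_id}, the sequences of (twisted) Alexander ideals are independent of the chosen presentation of $G_\Gamma$, so I am free to work with the presentation that makes the computation transparent.

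First I would take the presentation $G_\Gamma = \langle x_1,\ldots,x_{\beta_1(\Gamma)} \mid \emptyset \rangle$ coming from the isomorphism with $F_{\beta_1(\Gamma)}$. With this choice we have $s = \beta_1(\Gamma)$ generators and $t = 0$ relators, so the matrix $A(G_\Gamma,\tilde{\alpha}_l)$ has no nonzero rows: it is the $\infty \times s$ zero matrix. Then the formula for $E_d$ at the end of Section 2 applies directly: with $t=0$, the middle case $0<s-d\le t$ is vacuous, so $E_d(A(G_\Gamma,\tilde{\alpha}_l)) = (0)$ when $s-d > 0$, i.e.\ $d < \beta_1(\Gamma)$, and $E_d(A(G_\Gamma,\tilde{\alpha}_l)) = (1)$ when $s-d \le 0$, i.e.\ $d \ge \beta_1(\Gamma)$. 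This gives (\ref{trivial}).

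For the twisted version I would proceed identically. The twisted Alexander matrix $A(G_\Gamma,\tilde{\rho}\otimes\tilde{\alpha}_l)$ arising from the same presentation is an $\infty \times ns$ matrix in which only the first $nt = 0$ rows have nonzero entries, so again every entry is zero. Applying the definition of $E_d$ with $s$ replaced by $ns$ and $t$ replaced by $nt=0$ yields $E_d = (0)$ for $d < n\beta_1(\Gamma)$ and $E_d = (1)$ for $d \ge n\beta_1(\Gamma)$, establishing (\ref{trivial2}).

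There is essentially no obstacle here; the only point that needs a line of care is observing that the middle clause of the definition of $E_d$ (the minors clause) never triggers because $t=0$, so one falls directly into the two boundary cases, and the result is independent of the particular homology class $l$ and representation $\rho$ since neither enters a nontrivial computation.
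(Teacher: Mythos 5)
Your argument is exactly the one the paper intends: since a trivial $\Gamma$ has $G_\Gamma$ free of rank $\beta_1(\Gamma)$, the relator-free presentation gives a zero (twisted) Alexander matrix, and the claimed ideals fall out of the definition of $E_d$ with $t=0$ (respectively $nt=0$). The paper marks the lemma as immediate ``by the definition of the elementary ideals,'' so your write-up just makes that explicit; no discrepancy.
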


For a positive integer $n\ge 3$, let $\Theta_{n}$ be {\it Suzuki's theta-$n$ curve} \cite{Suzuki84} which is a spatial graph represented by  the diagram illustrated in Fig. \ref{Suzuki_theta_n}. Note that $\Theta_{3}$ is also called {\it Kinoshita's theta curve}. We denote $G_{\Theta_{n}}$ by $G_{n}$ simply.

\begin{figure}[htbp]
      \begin{center}
\scalebox{0.5}{\includegraphics*{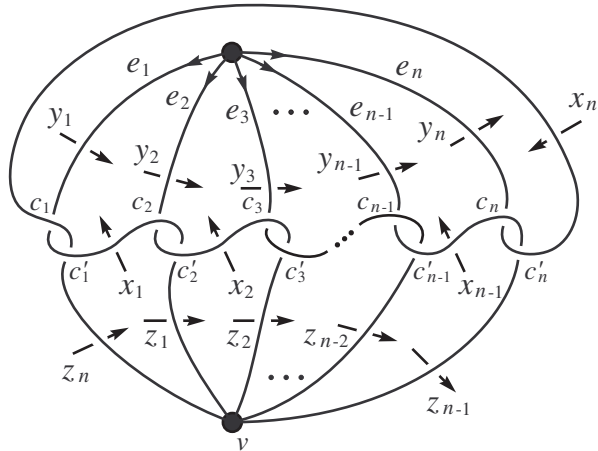}}
      \end{center}
   \caption{Suzuki's theta-$n$ curve}
  \label{Suzuki_theta_n}
\end{figure} 

It can be easily seen that any proper spatial subgraph of $\Theta_{n}$ is trivial. The non-triviality of $\Theta_{n}$ for all $n$ was shown in \cite{scharlemann92} first by a geometric argument and also was shown in \cite{Livingston95} by an application of a symmetry of $\Theta_{n}$ and the branched cover of $S^{3}$. Our purpose in this section is to demonstrate an effectiveness of (twisted) Alexander ideals by showing the non-triviality of $\Theta_{n}$ as an application of it. 

\begin{Lemma}\label{Su}{\rm (\cite{Suzuki84})} 
$G_{n}$ has a presentation 
\begin{eqnarray*}
\left\langle x_{1},x_{2},\ldots,x_{n}\,|\, \left(x_{1}x_{n}x_{1}^{-1}\right)\left(x_{2}x_{1}x_{2}^{-1}\right)
\cdots\left(x_{n}x_{n-1}x_{n}^{-1}\right)\right\rangle.
\end{eqnarray*}
\end{Lemma}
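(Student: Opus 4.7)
The plan is to derive the claimed presentation by running the Wirtinger algorithm for spatial graphs on the diagram in Fig. \ref{Suzuki_theta_n} and then reducing via Tietze transformations. Orient the $n$ edges of $\Theta_n$ from one vertex $v_+$ to the other vertex $v_-$, and for each edge $i$ assign a meridian generator $x_i$ to be the class of the arc adjacent to $v_+$ on the $i$-th edge. The Wirtinger-type presentation of $G_n = \pi_1(S^3-\Theta_n)$ has one generator per arc of the diagram, one conjugation relation per crossing, and one vertex relation per vertex of $\Theta_n$ (which, by Kinoshita's result cited above, can be reduced to one generator per edge and a total of at most one relation, since the deficiency must be $1-\beta_0(\Theta_n)+\beta_1(\Theta_n)=n-1$).

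First I would use the crossing relations to eliminate every auxiliary arc generator, keeping only the $n$ edge generators $x_1,\ldots,x_n$. The key observation, read off from the figure, is that Suzuki's diagram is built so that on the $i$-th edge, as one travels from $v_+$ to $v_-$, the strand passes under exactly the strand labeled $x_{i-1}$ (indices taken mod $n$, so edge $1$ passes under edge $n$). Each such undercrossing applies one conjugation by $x_i$, so that the arc reaching $v_-$ along edge $i$ is labeled $x_i x_{i-1} x_i^{-1}$.

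Second, I would write down the two vertex relations. At $v_+$ the incoming arcs are simply $x_1,\ldots,x_n$, giving $x_1 x_2 \cdots x_n = 1$ with appropriate sign conventions. At $v_-$ the incoming arcs are the conjugates above, which yields exactly
\[
(x_1 x_n x_1^{-1})(x_2 x_1 x_2^{-1}) \cdots (x_n x_{n-1} x_n^{-1}) = 1.
\]
Finally, I would invoke the standard fact that for a connected spatial graph the vertex relations are not independent: one of them is a consequence of all the others together with the crossing relations. Discarding the relation at $v_+$ leaves the presentation asserted by the lemma, and the deficiency count $n-1$ confirms no further reduction is possible.

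The main obstacle is step two, namely justifying that the arc entering $v_-$ along the $i$-th edge really carries the label $x_i x_{i-1} x_i^{-1}$ rather than some more complicated word; this requires a careful inspection of over/under information and orientations near $v_-$ in Fig. \ref{Suzuki_theta_n}, and is the essential geometric input of Suzuki's construction. Once that bookkeeping is done, the rest is purely formal Tietze calculus.
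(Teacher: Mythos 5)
Your overall strategy coincides with the paper's: take a Wirtinger presentation of the diagram in Fig.~\ref{Suzuki_theta_n}, with one relation per crossing and a vertex relation, discard a redundant vertex relation, and eliminate the auxiliary arc generators by Tietze moves; the deficiency count $1-\beta_0+\beta_1=n-1$ is used the same way. The problem is that your crossing bookkeeping is internally inconsistent and does not match the diagram. At an undercrossing the two under-arcs are conjugate by the label of the \emph{over}-strand, so if the $i$-th edge ``passes under exactly the strand labeled $x_{i-1}$,'' the new arc would be $x_{i-1}^{\pm1}x_ix_{i-1}^{\mp1}$, a conjugate of $x_i$ by $x_{i-1}$ --- not $x_ix_{i-1}x_i^{-1}$, which is a conjugate of $x_{i-1}$ by $x_i$ and therefore can only label an arc lying on the $(i-1)$-st edge after it passes under the strand $x_i$.

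In fact the paper's proof exhibits $2n$ crossings and three arcs per edge: the Wirtinger presentation is $\left\langle x_i,y_i,z_i \mid r_i,r'_i,r''\right\rangle$ with $r_i=x_ix_{i-1}y_i^{-1}x_{i-1}^{-1}$, $r'_i=z_{i-1}x_ix_{i-1}^{-1}x_i^{-1}$ and $r''=z_nz_1\cdots z_{n-1}$. Thus each edge passes under \emph{both} of its cyclic neighbours, $x_j$ is the \emph{middle} arc of the $j$-th edge (not the arc at $v_+$, as you assume), the end arcs are $y_j=x_{j-1}^{-1}x_jx_{j-1}$ and $z_j=x_{j+1}x_jx_{j+1}^{-1}$, and the single retained vertex relation is the one in the $z_j$'s; substituting $z_{i-1}=x_ix_{i-1}x_i^{-1}$ into $r''$ gives exactly the stated relator. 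Since the entire content of the lemma is this bookkeeping --- which you yourself flag as ``the essential geometric input'' and leave unverified --- the proposal as written does not establish the statement, even though the surrounding formal apparatus (Wirtinger presentation, redundancy of one vertex relation, Tietze elimination, deficiency count) is correct and agrees with the paper.
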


\begin{proof}
By taking a Wirtinger presentation on the diagram in Fig. \ref{Suzuki_theta_n}, we have 
\begin{eqnarray*}
G_{n}\cong \left\langle x_{i},y_{i},z_{i}\ (i=1,2,\ldots,n)\,|\,r_{i},r'_{i},r''\ (i=1,2,\ldots,n)\right\rangle,
\end{eqnarray*}
where $r_{i}$, $r'_{i}$ and $r''$ are relators corresponding to the crossings $c_{i}$, $c'_{i}$ and vertex $v$ as illustrated in Fig. \ref{Suzuki_theta_n}, respectively. Moreover we have 
\begin{eqnarray*}
r_{i} &=& x_{i}x_{i-1}y_{i}^{-1}x_{i-1}^{-1}\ (i=1,2,\ldots,n), \\
r'_{i} &=& z_{i-1}x_{i}x_{i-1}^{-1}x_{i}^{-1}\ (i=1,2,\ldots,n), \\
r'' &=& z_{n}z_{1}z_{2}\cdots z_{n-1},
\end{eqnarray*}
where suffix $i$ is taken modulo $n$. Then by deleting generators $y_{i},z_{i}$ and relators $r_{i},r'_{i}$ by Tietze transformations, we have the result.
\end{proof}

We denote the $1$-dimensional cycle $e_{i}-e_{n}$ on $\Theta_{n}$ by $l_{i}$ for $i=1,2,\ldots,n-1$. Note that $\left\{l_{1},l_{2},\ldots,l_{n-1}\right\}$ is a basis of $H_{1}\left(\Gamma;{\mathbb Z}\right)$, namely $\beta_{1}\left(\Gamma\right)=n-1$. Then $l=\sum_{i=1}^{n-1}l_{i}$ is also a $1$-dimensional cycle on $\Theta_{n}$. We denote the homomorphism $\alpha_{l}$ by $\alpha_{n}$. For generators $x_{1},x_{2},\ldots,x_{n}$ of $G_{n}$, we have
\begin{eqnarray*}
\alpha_{n}\left(x_{i}\right)&=& t^{{\rm lk}\left(x_{i},l\right)}=t^{{\rm lk}\left(x_{i},l_{i}\right)}=t\ (i=1,2,\ldots,n-1),\\
\alpha_{n}\left(x_{n}\right)&=& t^{{\rm lk}\left(x_{n},l\right)}=t^{\sum_{i=1}^{n-1}{\rm lk}\left(x_{n},l_{i}\right)}=t^{1-n}. 
\end{eqnarray*}
Then Sato calculated the Alexander ideal of $A\left(G_{n},\tilde{\alpha}_{n}\right)$ as follows.

\begin{Theorem}\label{suzuki_ideal2}{\rm (Sato \cite{Sato10})}
\begin{eqnarray*}
E_{d}\left(A\left(G_{n}, \tilde{\alpha}_{n}\right)\right)=\left\{
   \begin{array}{@{\,}ll}
       \left(0\right) & \mbox{$(d<n-1)$} \\
       \left(1-t+t^{2}, 1-t^{n}\right) & \mbox{$(d=n-1)$} \\
       \left(1\right) & \mbox{$(d \ge n)$}
   \end{array}
   \right. .
\end{eqnarray*}
In particular for $d=n-1$, it follows that 
\begin{eqnarray*}
E_{n-1}\left(A\left(G_{n},\tilde{\alpha}_{n}\right)\right)=\left\{
   \begin{array}{@{\,}ll}
       \left(1\right) & \mbox{$\left(n \equiv 1, 5 \pmod{6}\right)$} \\
       \left(3,1+t\right) & \mbox{$\left(n \equiv 2, 4 \pmod{6}\right)$} \\
       \left(2, 1-t+t^{2}\right) & \mbox{$\left(n \equiv 3 \pmod{6}\right)$} \\
       \left(1-t+t^{2}\right) & \mbox{$\left(n \equiv 0 \pmod{6}\right)$}
   \end{array}
   \right. . 
\end{eqnarray*}
\end{Theorem}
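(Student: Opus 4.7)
The plan is to compute $E_{n-1}$ directly from the one-relator presentation given in Lemma~\ref{Su}. Since that presentation has $n$ generators and a single relator, the definition of elementary ideals already forces $E_d\bigl(A(G_n,\tilde\alpha_n)\bigr)=(0)$ for $d<n-1$ and $E_d\bigl(A(G_n,\tilde\alpha_n)\bigr)=(1)$ for $d\ge n$, so the only nontrivial task is to identify the ideal $E_{n-1}$ generated by the $n$ Laurent polynomials $f_j:=\tilde\alpha_n\circ\tilde\phi\bigl(\partial r/\partial x_j\bigr)$ in $\mathbb{Z}[t^{\pm1}]$, where $r=u_1u_2\cdots u_n$ with $u_i=x_ix_{i-1}x_i^{-1}$ (indices mod $n$).

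First I would apply the Leibniz rule together with the vanishing $\partial u_i/\partial x_j=0$ unless $j\in\{i,\ i-1\bmod n\}$. Using $\tilde\alpha_n(x_i)=t$ for $i<n$ and $\tilde\alpha_n(x_n)=t^{1-n}$, the partial product evaluates to $\tilde\alpha_n(u_1\cdots u_k)=t^{k-n}$ for $k\ge 1$, and a short calculation then shows that the ``interior'' entries collapse to unit multiples of $1-t+t^2$: explicitly $f_j=t^{j-n-1}(1-t+t^2)$ for $2\le j\le n-2$, and also $f_n=t^{-1}(1-t+t^2)$, giving $(1-t+t^2)\subseteq E_{n-1}$. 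Next, I would rewrite the boundary entry $f_{n-1}$ (after clearing its denominator by multiplying by $t^{n}$) as $t^{n-2}+1-t^{n-1}=t^{n-2}(1-t+t^2)+(1-t^n)$, so that $1-t^n\in E_{n-1}$. The remaining boundary entry satisfies $t^{n-1}f_1=t^{n-1}(1-t+t^2)-(1-t)(1-t^n)$ and thus contributes nothing new, yielding the first formula $E_{n-1}=(1-t+t^2,\,1-t^n)$.

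The second, case-by-case formula would then follow by purely algebraic manipulations. Since $1-t+t^2$ is the sixth cyclotomic polynomial, one has $t^{6}\equiv 1\pmod{1-t+t^2}$, so $1-t^n$ reduces modulo $1-t+t^2$ to a polynomial determined by $n\bmod 6$, with residues $1-t,\,2-t,\,2,\,1+t,\,t,\,0$ for $n\equiv 1,2,3,4,5,0\pmod 6$ respectively. Using $t(1-t)\equiv 1\pmod{1-t+t^2}$ the residues $1-t$ and $t$ are units, so the cases $n\equiv 1,5\pmod 6$ would yield $E_{n-1}=(1)$; the case $n\equiv 0\pmod 6$ would give $E_{n-1}=(1-t+t^2)$; and for the remaining three cases I would simplify further by standard ideal manipulations (for instance $(1-t+t^2,\,1+t)=(3,\,1+t)$, since substituting $t=-1$ gives $1-t+t^2=3$) to recover the ideals stated in the theorem.

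The main obstacle I expect is the careful bookkeeping caused by the cyclic wrap $u_1=x_1x_nx_1^{-1}$: the anomalous weight $\tilde\alpha_n(x_n)=t^{1-n}$ forces the boundary entries $f_1,\,f_{n-1},\,f_n$ to break from the uniform pattern of the interior entries, and one must also verify the argument in the edge cases $n=3,4$ where the interior range $2\le j\le n-2$ contains at most a single index.
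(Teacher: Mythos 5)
Your proposal is correct and follows essentially the same route as the paper: the same one-relator presentation from Lemma~\ref{Su}, the same Fox-calculus evaluation giving unit multiples of $1-t+t^{2}$ for the interior (and $j=n$) entries and the two anomalous boundary entries, the same reduction to $(1-t+t^{2},\,1-t^{n})$ via explicit polynomial identities (yours use $f_{n-1}$ where the paper uses $f_{1}$ to extract $1-t^{n}$, an immaterial difference), and the same mod-$6$ residue analysis for the second formula. All the claimed identities and residues check out.
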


In \cite{Sato10}, Sato calculated Alexander ideals for a wide class of spatial graphs containing $\Theta_{n}$. In the following, we describe a proof of Theorem \ref{suzuki_ideal2} for reader's convenience. 

\begin{proof}
By Lemma \ref{Su}, it is clear that $E_{d}\left(A\left(G_{n},\tilde{\alpha}_{n}\right)\right)=(0)$ if $d< n-1$ and $(1)$ if $d\ge n$. So we calculate $E_{n-1}\left(A\left(G_{n},\tilde{\alpha}_{n}\right)\right)$. We put 
\begin{eqnarray*}
r=\left(x_{1}x_{n}x_{1}^{-1}\right)\left(x_{2}x_{1}x_{2}^{-1}\right)
\cdots\left(x_{n}x_{n-1}x_{n}^{-1}\right).
\end{eqnarray*}
Then by a direct calculation we have 
\begin{eqnarray}\label{fd1}
\frac{\partial r}{\partial x_{i}}
&=&\left\{\prod_{m=1}^{i-1}\left(x_{m}x_{m-1}x_{m}^{-1}\right)\right\}
\left\{e-\left(x_{i}x_{i-1}x_{i}^{-1}\right)
+\left(x_{i}x_{i-1}x_{i}^{-1}\right)x_{i+1}\right\}
\end{eqnarray}
for $i=1,2,\ldots,n-1$ and
\begin{eqnarray}\label{fd2}
\frac{\partial r}{\partial x_{n}}
&=& x_{1}+\prod_{m=1}^{n-1}\left(x_{m}x_{m-1}x_{m}^{-1}\right)-r.
\end{eqnarray}
Thus we have
\begin{eqnarray}
\tilde{\alpha}_{n}\circ \tilde{\phi}\left(\frac{\partial r}{\partial x_{i}}\right)
&=& t^{i-1-n}\left(1-t+t^{2}\right)\ (i\neq 1,n-1),\label{fd3}\\
\tilde{\alpha}_{n}\circ \tilde{\phi}\left(\frac{\partial r}{\partial x_{1}}\right)
&=& t^{1-n}\left(t^{n-1}-1+t\right),\label{fd4}\\
\tilde{\alpha}_{n}\circ \tilde{\phi}\left(\frac{\partial r}{\partial x_{n-1}}\right)
&=& t^{-n}\left(t^{n-2}-t^{n-1}+1\right). \label{fd5}
\end{eqnarray}
Note that 
\begin{eqnarray}
1-t^{n} &=& \left(1-t+t^{2}\right)-t\left(t^{n-1}-1+t\right), \label{r1} \\
t^{n-2}-t^{n-1}+1 &=& t^{-1}\left(1-t^{n}\right)
+t^{-1}\left(t^{n-1}-1+t\right). \label{r2}
\end{eqnarray}
Then by (\ref{fd3}),(\ref{fd4}),(\ref{fd5}),(\ref{r1}) and (\ref{r2}), it follows that
\begin{eqnarray*}
E_{n-1}\left(A\left(G_{n}, \tilde{\alpha}_{n}\right)\right)
&=& \left(t^{n-1}-1+t,\ 1-t+t^{2},\ t^{n-2}-t^{n-1}+1\right)\\
&=& \left(1-t^{n},\ t^{n-1}-1+t,\ 1-t+t^{2},\ t^{n-2}-t^{n-1}+1\right)\\
&=& \left(1-t^{n},\ t^{n-1}-1+t,\ 1-t+t^{2}\right)\\
&=& \left(1-t^{n},\ 1-t+t^{2}\right). 
\end{eqnarray*}
Namely we obtain the first half of the theorem. Next we show the second half. Note that the reminder of dividing $1-t^{n}$ by $1-t+t^{2}$ equals $0$ if $n\equiv 0 \pmod{6}$, $1-t$ if $n\equiv 1 \pmod{6}$, $2-t$ if $n\equiv 2 \pmod{6}$, $2$ if $n\equiv 3 \pmod{6}$, $1+t$ if $n\equiv 4 \pmod{6}$ and $t$ if $n\equiv 5 \pmod{6}$. Therefore we have 
\begin{eqnarray*}
\left(1-t+t^{2},1-t^{n}\right)=
\left\{
   \begin{array}{@{\,}ll}
       \left(1-t+t^{2}\right) & \mbox{$\left(n\equiv 0 \pmod{6}\right)$} \\
       \left(1-t+t^{2},1-t\right)=(1) & \mbox{$\left(n\equiv 1 \pmod{6}\right)$} \\
       \left(1-t+t^{2},2-t\right)=(3,1+t) & \mbox{$\left(n\equiv 2 \pmod{6}\right)$} \\
       \left(1-t+t^{2},2\right)  & \mbox{$\left(n\equiv 3 \pmod{6}\right)$} \\
       \left(1-t+t^{2},1+t\right)=(3,1+t) & \mbox{$\left(n\equiv 4 \pmod{6}\right)$} \\
       \left(1-t+t^{2},t\right)=(1) & \mbox{$\left(n\equiv 5 \pmod{6}\right)$}
   \end{array}
   \right. . 
\end{eqnarray*}
This completes the proof. 
\end{proof}

\begin{Remark}
Let $\alpha'_{n}$ be the homomorphism from $G_{n}$ to $\left\langle t\,|\,t^{n}\right\rangle$ defined by $\alpha'_{n}\left(x_{i}\right)=t\ (i=1,2,\ldots,n)$. Note that $\alpha'_{n}=\xi_{n}\circ\alpha_{n}$, where $\xi_{n}$ is the canonical homomorphism from $\left\langle t\,|\,\emptyset\right\rangle$ to $\left\langle t\,|\,t^{n}\right\rangle$. Then Theorem \ref{suzuki_ideal2} leads to the following result of Suzuki \cite{Suzuki84}: 
\begin{eqnarray*}
E_{d}\left(A\left(G_{n}, \tilde{\alpha}'_{n}\right)\right)=\left\{
   \begin{array}{@{\,}ll}
       \left(0\right) & \mbox{$(d<n-1)$} \\
       \left(1-t+t^{2}\right) & \mbox{$(d=n-1)$} \\
       \left(1\right) & \mbox{$(d \ge n)$}
   \end{array}
   \right. . 
\end{eqnarray*}
Note that if $n \equiv 1, 5 \pmod{6}$ then $1-t+t^{2}$ is invertible in ${\mathbb Z}\left[t,t^{-1}\right]$ and therefore $E_{d}\left(A\left(G_{n}, \tilde{\alpha}'_{n}\right)\right)=(1)$ (this was pointed out in \cite{McAteeSilverWilliams01} first).
\end{Remark}

By Lemma \ref{trivia} and Theorem \ref{suzuki_ideal2}, it follows that $\Theta_{n}$ is nontrivial for $n\equiv 0,2,3,4\pmod{6}$. In the case of $n\equiv 1,5\pmod{6}$, Theorem \ref{suzuki_ideal2} does not work for showing the nontriviality of $\Theta_{n}$. In the following, we apply twisted Alexander ideals to $\Theta_{n}$ in the case of $n\equiv 1,5\pmod{6}$.

\begin{Lemma}\label{rho_homo}
Let $n$ be a positive integer satisfying $n\ge 3$ and $n\equiv 1,5\pmod{6}$.
Let $\rho$ be a map from $G_{n}$ to $SL\left(2;{\mathbb Z}_{2}\right)$ defined by
\begin{eqnarray*}
\rho\left(x_{i}\right) &=& \begin{pmatrix} 0 & 1 \\ 1 & 1 \end{pmatrix}\ (i=1,2,\ldots,6k+3),\\
\rho\left(x_{6k+4}\right) &=& \begin{pmatrix} 0 & 1 \\ 1 & 0 \end{pmatrix},\ 
\rho\left(x_{6k+5}\right) = \begin{pmatrix} 1 & 0 \\ 1 & 1 \end{pmatrix}
\end{eqnarray*}
if $n=6k+5\ (k\ge 0)$, and 
\begin{eqnarray*}
\rho\left(x_{i}\right) &=& \begin{pmatrix} 0 & 1 \\ 1 & 1 \end{pmatrix}\ (i=1,2,\ldots,6k+3),\\
\rho\left(x_{6k+4}\right) &=& \begin{pmatrix} 0 & 1 \\ 1 & 0 \end{pmatrix},\ 
\rho\left(x_{6k+5}\right) = \begin{pmatrix} 1 & 0 \\ 1 & 1 \end{pmatrix},\\
\rho\left(x_{6k+6}\right) &=& \begin{pmatrix} 0 & 1 \\ 1 & 0 \end{pmatrix},\ 
\rho\left(x_{6k+7}\right) = \begin{pmatrix} 1 & 0 \\ 1 & 1 \end{pmatrix}
\end{eqnarray*}
if $n=6k+7\ (k\ge 0)$.
Then $\rho$ is a group representation.
\end{Lemma}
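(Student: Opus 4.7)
The plan is to use the one-relator presentation
\[
G_n = \langle x_1, \ldots, x_n \mid r \rangle, \qquad r = (x_1 x_n x_1^{-1})(x_2 x_1 x_2^{-1}) \cdots (x_n x_{n-1} x_n^{-1})
\]
provided by Lemma \ref{Su}. Because $G_n$ has a single defining relator, the prescription $\rho$ determines a group homomorphism $G_n \to SL(2;\mathbb{Z}_2)$ if and only if $\rho(r) = I$. The whole argument thus reduces to evaluating this one matrix product modulo $2$.

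First I would catalog the algebraic structure of $SL(2;\mathbb{Z}_2) \cong S_3$. Writing
\[
A = \begin{pmatrix} 0 & 1 \\ 1 & 1 \end{pmatrix}, \qquad B = \begin{pmatrix} 0 & 1 \\ 1 & 0 \end{pmatrix}, \qquad C = \begin{pmatrix} 1 & 0 \\ 1 & 1 \end{pmatrix},
\]
one has $A^3 = B^2 = C^2 = I$, and a short direct computation produces the four identities
\[
A C A^{-1} = B, \qquad B A B^{-1} = A^{-1}, \qquad C B C^{-1} = B C B^{-1} = BA, \qquad (BA)^2 = I.
\]
These are the only element-level matrix products that need to be performed by hand; everything thereafter is symbolic manipulation with these identities together with $A^3 = I$.

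Next I would split $\rho(r)$ into blocks according to the image of each generator. The central stretch $\prod_{i=2}^{6k+3} \rho(x_i x_{i-1} x_i^{-1})$ collapses to $A^{6k+2}$, and the first factor yields $A C A^{-1} = B$ since $\rho(x_n) = C$ in both cases. For $n = 6k+5$ the two trailing factors are $B A B^{-1} = A^{-1}$ and $C B C^{-1} = BA$, so
\[
\rho(r) = B \cdot A^{6k+2} \cdot A^{-1} \cdot (BA) = B \cdot A \cdot (BA) = (BA)^2 = I,
\]
using $A^{6k+1} = A$. For $n = 6k+7$ two further factors $B C B^{-1}$ and $C B C^{-1}$ each contribute $BA$, and the same collapse gives $\rho(r) = B \cdot A \cdot (BA)^3 = (BA)^4 = I$.

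There is no genuine conceptual obstacle here: the proof is a finite matrix calculation, controlled entirely by the four identities listed above. The only care required is in handling the two residue classes $n \equiv 5, 1 \pmod{6}$ uniformly, by observing that the additional generators in the case $n = 6k+7$ introduce precisely an extra factor of $(BA)^2 = I$, so the reduction to the identity matrix proceeds identically in both cases.
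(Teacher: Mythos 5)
Your proof is correct and follows essentially the same route as the paper: reduce to verifying $\rho(r)=E$ via the one-relator presentation of Lemma \ref{Su}, evaluate each conjugate factor, collapse the middle block using $A^{3}=E$, and multiply out the remaining handful of matrices (the paper writes the matrices explicitly rather than using the $S_3$ identities, but the computation is identical). All of your stated identities and both final products check out.
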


\begin{proof}
By Lemma \ref{Su}, it is sufficient to show that $\rho\left(r\right)$ equals to the identity matrix $E$.
In the case of $n=6k+5\ (k\ge 0)$, we have
\begin{eqnarray}
\rho\left(x_{1}x_{6k+5}x_{1}^{-1}\right)
&=& \begin{pmatrix} 0 & 1 \\ 1 & 0 \end{pmatrix},\label{65_1}
\end{eqnarray}
\begin{eqnarray}
\rho\left(x_{i+1}x_{i}x_{i+1}^{-1}\right)
 &=& \begin{pmatrix} 0 & 1 \\ 1 & 1 \end{pmatrix}\label{65_2}\ (i=1,2,\ldots,6k+2),
\end{eqnarray}
\begin{eqnarray}
\rho\left(x_{6k+4}x_{6k+3}x_{6k+4}^{-1}\right)
 &=& \begin{pmatrix} 1 & 1 \\ 1 & 0 \end{pmatrix},\label{65_3}
\end{eqnarray}
\begin{eqnarray}
\rho\left(x_{6k+5}x_{6k+4}x_{6k+5}^{-1}\right)
 &=& \begin{pmatrix} 1 & 1 \\ 0 & 1 \end{pmatrix}.\label{65_4}
\end{eqnarray}
Note that 
\begin{eqnarray*}
{\begin{pmatrix} 0 & 1 \\ 1 & 1 \end{pmatrix}}^{3} = E
\end{eqnarray*}
in $SL\left(2;{\mathbb Z}_{2}\right)$. Then by (\ref{65_1}), (\ref{65_2}), (\ref{65_3}) and (\ref{65_4}), we have 
\begin{eqnarray*}
\rho\left(r\right)
&=& \rho\left(x_{1}x_{6k+5}x_{1}^{-1}\right)
\rho\left(x_{2}x_{1}x_{2}^{-1}\right)\cdots \rho\left(x_{6k+5}x_{6k+4}x_{6k+5}^{-1}\right)\\
&=& 
\begin{pmatrix} 0 & 1 \\ 1 & 0 \end{pmatrix}
{\begin{pmatrix} 0 & 1 \\ 1 & 1 \end{pmatrix}}^{2}
\begin{pmatrix} 1 & 1 \\ 1 & 0 \end{pmatrix}
\begin{pmatrix} 1 & 1 \\ 0 & 1 \end{pmatrix}\\
&=& E.
\end{eqnarray*}
In the case of $n=6k+7\ (k\ge 0)$, we have 
\begin{eqnarray}
\rho\left(x_{1}x_{6k+7}x_{1}^{-1}\right)
 &=& \begin{pmatrix} 0 & 1 \\ 1 & 0 \end{pmatrix},\label{67_1}
\end{eqnarray}
\begin{eqnarray}
\rho\left(x_{i+1}x_{i}x_{i+1}^{-1}\right)
 &=& \begin{pmatrix} 0 & 1 \\ 1 & 1 \end{pmatrix}\label{67_2}\ (i=1,2,\ldots,6k+2),
\end{eqnarray}
\begin{eqnarray}
 \rho\left(x_{6k+4}x_{6k+3}x_{6k+4}^{-1}\right)
 &=& \begin{pmatrix} 1 & 1 \\ 1 & 0 \end{pmatrix},\label{67_3}
\end{eqnarray}
\begin{eqnarray} 
 \rho\left(x_{6k+j}x_{6k+j-1}x_{6k+j}^{-1}\right)
 &=& \begin{pmatrix} 1 & 1 \\ 0 & 1 \end{pmatrix}\ (j=5,6,7).\label{67_4}
\end{eqnarray}
Then by (\ref{67_1}), (\ref{67_2}), (\ref{67_3}) and (\ref{67_4}), we have 
\begin{eqnarray*}
\rho\left(r\right)
&=& \rho\left(x_{1}x_{6k+7}x_{1}^{-1}\right)
\rho\left(x_{2}x_{1}x_{2}^{-1}\right)\cdots \rho\left(x_{6k+7}x_{6k+6}x_{6k+7}^{-1}\right)\\
&=& 
\begin{pmatrix} 0 & 1 \\ 1 & 0 \end{pmatrix}
{\begin{pmatrix} 0 & 1 \\ 1 & 1 \end{pmatrix}}^{2}
\begin{pmatrix} 1 & 1 \\ 1 & 0 \end{pmatrix}
{\begin{pmatrix} 1 & 1 \\ 0 & 1 \end{pmatrix}}^{3}\\
&=& E.
\end{eqnarray*}
Thus we have the desired conclusion.
\end{proof}

Now we state our main theorem in this section.

\begin{Theorem}\label{suzuki_twist}
Let $n$ be a positive integer satisfying $n\ge 3$ and $n\equiv 1,5\pmod{6}$. Let $\rho$ be a group presentation from $G_{n}$ to $SL\left(2;{\mathbb Z}_{2}\right)$ defined in Lemma \ref{rho_homo}. Then it follows that 
\begin{eqnarray*}
E_{d}\left(A\left(G_{n}, \tilde{\rho}\otimes\tilde{\alpha}_{n}\right)\right)=\left\{
   \begin{array}{@{\,}ll}
       \left(0\right) & \mbox{$(d<2n-2)$} \\
       \left(1+t\right) & \mbox{$(d=2n-2)$} \\
       \left(1\right) & \mbox{$(d \ge 2n-1)$}
   \end{array}
   \right. . 
\end{eqnarray*}
\end{Theorem}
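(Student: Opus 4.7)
The plan is to work with the single-relator presentation of $G_n$ from Lemma \ref{Su}. With $n$ generators, one relator, and $\rho$ of degree $2$, the twisted Alexander matrix $A = A(G_n, \tilde\rho\otimes\tilde\alpha_n)$ is effectively a $2 \times 2n$ matrix over $R = \mathbb{Z}_2[t,t^{-1}]$. From this shape, $E_d(A) = (0)$ for $d < 2n-2$ and $E_d(A) = (1)$ for $d \ge 2n$ are automatic; only $E_{2n-1}(A)$ (the ideal of entries) and $E_{2n-2}(A)$ (the ideal of $2 \times 2$ minors) require computation. We organize $A$ into $n$ column-blocks $M_1,\ldots,M_n$, one per generator, with $M_i$ for $i<n$ coming from (\ref{fd1}) and $M_n$ from (\ref{fd2}).

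Using the matrices in Lemma \ref{rho_homo}, a short calculation shows that $R_n := \rho(\prod_{m=1}^{n-1}(x_m x_{m-1}x_m^{-1})) = SA = \begin{pmatrix}1&1\\0&1\end{pmatrix}$ in both cases $n = 6k+5$ and $n = 6k+7$, so the last block has the uniform form
\[
M_n = tA + t^{-1}R_n - E = \begin{pmatrix} t^{-1}-1 & t+t^{-1} \\ t & t+t^{-1}-1 \end{pmatrix},
\]
whose $(2,1)$-entry is the unit $t$, giving $E_{2n-1}(A) = (1)$. For the bulk range $2 \le i \le 6k+2$, where $\rho(x_{i-1}) = \rho(x_i) = \rho(x_{i+1}) = A$, the key identity
\[
E + tA + t^2A^2 = (1+t)\begin{pmatrix} 1+t & t \\ t & 1 \end{pmatrix}
\]
in $M_2(R)$ shows $M_i = (1+t)N_i$, and in particular each bulk block vanishes at $t=1$.

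For $E_{2n-2}(A) \subseteq (1+t)$, it then suffices to verify that every boundary block has zero first row at $t=1$; this makes the entire first row of $A$ divisible by $1+t$, forcing every $2\times 2$ minor into $(1+t)$. The boundary indices are $\{1, 6k+3, 6k+4, n\}$ when $n = 6k+5$, and $\{1, 6k+3, 6k+4, 6k+5, 6k+6, n\}$ when $n = 6k+7$; each corresponding block is computed directly from Lemma \ref{rho_homo} and checked. For the reverse inclusion, two specific minors suffice. Direct calculation yields
\[
\det M_n = t^{-2}(1+t)(1+t+t^3),
\]
where $1+t+t^3$ is irreducible in $\mathbb{Z}_2[t]$. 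Pairing the first column of $M_1 = E - t^{1-n}S + t^{2-n}SA$ with the first column of $M_n$ gives
\[
\det\begin{pmatrix} 1+t^{2-n} & t^{-1}+1 \\ t^{1-n} & t \end{pmatrix} = t^{-n}(t^{n+1}+t^3+t+1).
\]
At any root $\alpha \in \mathbb{F}_8$ of $1+t+t^3$, the relation $\alpha^3 = \alpha + 1$ reduces the polynomial in parentheses to $\alpha^{n+1}$, so this minor equals $\alpha \ne 0$. Hence $1+t+t^3$ divides $\det M_n$ but not this second minor, and since $1+t+t^3$ is prime in the PID $R$, the g.c.d.\ of these two elements of $E_{2n-2}(A)$ is exactly $(1+t)$.

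The main obstacle is the boundary-block computation. Each $M_i$ with $i$ boundary is of the form $\alpha_i R_i\cdot [E - b_i B_i + b_i\alpha_n(x_{i+1})B_i\rho(x_{i+1})]$, which one must expand using the specific $\rho$-values and check that its first row vanishes at $t = 1$. The bookkeeping of $R_i$ across the transition from the $A$-region (indices $1$ through $6k+3$) to the $S,T$-region (indices $6k+4, 6k+5$, and for $n=6k+7$ also $6k+6, 6k+7$) is the delicate part, but the number of boundary blocks is at most six in each case and the individual computations are routine. The uniform formulas for $\det M_n$ and for the auxiliary minor above then close the argument for both residue classes simultaneously.
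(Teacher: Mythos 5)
Your proposal is correct, and its computational core --- the one-relator presentation of Lemma \ref{Su} and the block-by-block evaluation of the Fox derivatives under $\tilde{\rho}\otimes\tilde{\alpha}_{n}$ --- is the same as the paper's; the difference lies entirely in the endgame. The paper writes out every block explicitly (\eqref{65a1}--\eqref{65a8}, resp.\ \eqref{67a1}--\eqref{67a10}) and then column-reduces the whole $2\times 2n$ matrix to $\bigl(\begin{smallmatrix}1+t&0&\cdots\\0&1&\cdots\end{smallmatrix}\bigr)$, the key manipulation being \eqref{2-n}, which extracts the columns $(0,1)^{T}$ and $(1+t,0)^{T}$ from the blocks of $x_{2}$ and $x_{n}$. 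You instead pin down $E_{2n-2}$ ideal-theoretically: the upper bound comes from the observation that the entire first row is divisible by $1+t$ (your identity $E+tA+t^{2}A^{2}=(1+t)\bigl(\begin{smallmatrix}1+t&t\\t&1\end{smallmatrix}\bigr)$ handles the bulk columns, and the at most six boundary blocks --- which are exactly the displayed formulas in the paper --- do all have first row vanishing at $t=1$), and the lower bound comes from the gcd of $\det M_{n}=t^{-2}(1+t)(1+t+t^{3})$ and the minor on columns $1$ and $2n-1$, using that $1+t+t^{3}$ is irreducible over $\mathbb{Z}_{2}$ and does not vanish at a root $\alpha\in\mathbb{F}_{8}$ of itself. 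I have checked your individual claims ($R_{n}=SA$, the value of $\det M_{n}$, the value $t^{-n}(t^{n+1}+t^{3}+t+1)$ of the auxiliary minor, and the first-row divisibility of each boundary block against the paper's formulas) and they are all correct, so there is no gap. What your route buys is that the full column reduction is never needed and both residue classes are treated uniformly once the boundary rows are verified; what it costs is that the first-row-vanishing property is basis-dependent (it would be destroyed by conjugating $\rho$), so those finitely many boundary checks are genuinely load-bearing and must be carried out exactly as you flag --- in effect they reproduce the paper's displayed computations restricted to the first row.
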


\begin{proof}
We denote the composition map $\left(\tilde{\rho}\otimes\tilde{\alpha}_{n}\right)\circ\tilde{\phi}$ by $\Phi_{n}$. By (\ref{fd1}) and (\ref{fd2}), we have
\begin{eqnarray}\label{afd1}
 \Phi_{n}
\left(\frac{\partial r}{\partial x_{i}}\right)
&=& 
\left\{\prod_{m=1}^{i-1}
\Phi_{n}\left(x_{m}x_{m-1}x_{m}^{-1}\right)\right\}\\
&& \cdot\left\{E-\Phi_{n}\left(x_{i}x_{i-1}x_{i}^{-1}\right)
+\Phi_{n}\left(x_{i}x_{i-1}x_{i}^{-1}\right)
\cdot\Phi_{n}\left(x_{i+1}\right)\right\}\nonumber
\end{eqnarray}
for $i=1,2,\ldots,n-1$ and 
\begin{eqnarray}\label{afd2}
\Phi_{n}
\left(\frac{\partial r}{\partial x_{n}}\right)&=& 
\Phi_{n}\left(x_{1}\right)
+\prod_{m=1}^{n-1}\Phi_{n}\left(x_{m}x_{m-1}x_{m}^{-1}\right)-E. 
\end{eqnarray}

First we show in the case of $n=6k+5$. By combining (\ref{65_1}), (\ref{65_2}), (\ref{65_3}) (\ref{65_4}) with (\ref{afd1}), (\ref{afd2}), we have 
\begin{eqnarray}
\Phi_{n}\left(\frac{\partial r}{\partial x_{1}}\right)
&=& \begin{pmatrix} 1 & 0 \\ 0 & 1 \end{pmatrix}
-t^{-6k-4}\begin{pmatrix} 0 & 1 \\ 1 & 0 \end{pmatrix}
+t^{-6k-3}\begin{pmatrix} 1 & 1 \\ 0 & 1 \end{pmatrix}\label{65a1}\\
&=& t^{-6k-4}{\begin{pmatrix} t+t^{6k+4} & 1+t \\ 1 & t+ t^{6k+4} \end{pmatrix}},\nonumber
\end{eqnarray}
\begin{eqnarray}
&&\Phi_{n}\left(\frac{\partial r}{\partial x_{3l+2}}\right)\label{65a2}\\
&=& 
t^{3l}\left\{t^{-6k-4}\begin{pmatrix} 0 & 1 \\ 1 & 0 \end{pmatrix}
-t^{-6k-3}\begin{pmatrix} 1 & 1 \\ 0 & 1 \end{pmatrix}
+t^{-6k-2}\begin{pmatrix} 1 & 0 \\ 1 & 1 \end{pmatrix}
\right\}\nonumber
\\
&=& t^{3l-6k-4}{\begin{pmatrix} t+t^{2} & 1+t \\ 1+t^{2} & t+t^{2} \end{pmatrix}}\ (l=0,1,\ldots,2k-1),\nonumber
\end{eqnarray}
\begin{eqnarray}
&&\Phi_{n}\left(\frac{\partial r}{\partial x_{3l+3}}\right)\label{65a3}\\
&=& 
t^{3l}\left\{t^{-6k-3}\begin{pmatrix} 1 & 1 \\ 0 & 1 \end{pmatrix}
- t^{-6k-2}\begin{pmatrix} 1 & 0 \\ 1 & 1 \end{pmatrix}
+ t^{-6k-1}\begin{pmatrix} 0 & 1 \\ 1 & 0 \end{pmatrix}
\right\}\nonumber
\\
&=& t^{3l-6k-3}{\begin{pmatrix} 1+t & 1+t^{2} \\ t+t^{2} & 1+t \end{pmatrix}}\ (l=0,1,\ldots,2k-1),\nonumber
\end{eqnarray}
\begin{eqnarray}
&&\Phi_{n}\left(\frac{\partial r}{\partial x_{3l+4}}\right)\label{65a4}\\
&=& 
t^{3l}\left\{t^{-6k-2}\begin{pmatrix} 1 & 0 \\ 1 & 1 \end{pmatrix}
- t^{-6k-1}\begin{pmatrix} 0 & 1 \\ 1 & 0 \end{pmatrix}
+ t^{-6k}\begin{pmatrix} 1 & 1 \\ 0 & 1 \end{pmatrix}
\right\}\nonumber
\\
&=& t^{3l-6k-2}{\begin{pmatrix} 1+t^{2} & t+t^{2} \\ 1+t & 1+t^{2} \end{pmatrix}}\ (l=0,1,\ldots,2k-1),\nonumber
\end{eqnarray}
\begin{eqnarray}
\Phi_{n}\left(\frac{\partial r}{\partial x_{6k+2}}\right)
&=& 
t^{-4}\begin{pmatrix} 0 & 1 \\ 1 & 0 \end{pmatrix}
- t^{-3}\begin{pmatrix} 1 & 1 \\ 0 & 1 \end{pmatrix}
+ t^{-2}\begin{pmatrix} 1 & 0 \\ 1 & 1 \end{pmatrix}
\label{65a5}\\
&=& t^{-4}{\begin{pmatrix} t+t^{2} & 1+t \\ 1+t^{2} & t+t^{2} \end{pmatrix}},\nonumber
\end{eqnarray}
\begin{eqnarray}
\Phi_{n}\left(\frac{\partial r}{\partial x_{6k+3}}\right)
&=& 
t^{-3}\begin{pmatrix} 1 & 1 \\ 0 & 1 \end{pmatrix}
- t^{-2}\begin{pmatrix} 1 & 0 \\ 1 & 1 \end{pmatrix}
+ t^{-1}\begin{pmatrix} 0 & 1 \\ 1 & 1 \end{pmatrix}
\label{65a6}\\
&=& t^{-3}{\begin{pmatrix} 1+t & 1+t^{2} \\ t+t^{2} & 1+t+t^{2} \end{pmatrix}},\nonumber
\end{eqnarray}
\begin{eqnarray}
\Phi_{n}\left(\frac{\partial r}{\partial x_{6k+4}}\right)
&=& 
t^{-2}\begin{pmatrix} 1 & 0 \\ 1 & 1 \end{pmatrix}
- t^{-1}\begin{pmatrix} 1 & 1 \\ 0 & 1 \end{pmatrix}
+ t^{-6k-5}\begin{pmatrix} 0 & 1 \\ 1 & 1 \end{pmatrix}
\label{65a7}\\
&=& t^{-6k-5}{\begin{pmatrix} t^{6k+3}+t^{6k+4} & 1+t^{6k+4} \\ 1+t^{6k+3} & 1+t^{6k+3}+t^{6k+4} \end{pmatrix}},\nonumber
\end{eqnarray}
\begin{eqnarray}
\Phi_{n}\left(\frac{\partial r}{\partial x_{6k+5}}\right)
&=& 
t^{-1}\begin{pmatrix} 1 & 1 \\ 0 & 1 \end{pmatrix}
- \begin{pmatrix} 1 & 0 \\ 0 & 1 \end{pmatrix}
+ t\begin{pmatrix} 0 & 1 \\ 1 & 1 \end{pmatrix}
\label{65a8}\\
&=& t^{-1}{\begin{pmatrix} 1+t & 1+t^{2} \\ t^{2} & 1+t+t^{2} \end{pmatrix}}.\nonumber
\end{eqnarray}

By (\ref{65a2}) and (\ref{65a8}), we see that 
\begin{eqnarray}\label{2-n}
&&\left(\Phi_{n}\left(\frac{\partial r}{\partial x_{2}}\right)\ \Phi_{n}\left(\frac{\partial r}{\partial x_{6k+5}}\right)\right)\\
&\sim& 
\begin{pmatrix} 
t+t^{2} & 1+t & 1+t & 1+t^{2} \\ 
1+t^{2} & t+t^{2} & t^{2} & 1+t+t^{2} 
\end{pmatrix}\nonumber\\
&\sim& 
\begin{pmatrix} 
t+t^{2} & 0 & 1+t & 1+t^{2} \\ 
1+t^{2} & t & t^{2} & 1+t+t^{2} 
\end{pmatrix}\nonumber\\
&\sim& 
\begin{pmatrix} 
t+t^{2} & 0 & 1+t & 1+t^{2} \\ 
0 & 1 & 0 & 0 
\end{pmatrix}\nonumber\\
&\sim& 
\begin{pmatrix} 
0 & 0 & 1+t & 0 \\ 
0 & 1 & 0 & 0 
\end{pmatrix}\nonumber
\end{eqnarray}
by the elementary transformations of columns. Then by (\ref{65a1})--(\ref{65a8}) and (\ref{2-n}), it is easy to see that
\begin{eqnarray*}
\left(\Phi_{n}\left(\frac{\partial r}{\partial x_{i}}\right)\right)_{i=1,2,\ldots,6k+5}
\sim 
\begin{pmatrix} 
1+t & 0 & 0 & \cdots & 0\\ 
0 & 1  & 0 & \cdots & 0
\end{pmatrix}.
\end{eqnarray*}
This implies the desired conclusion. 

Next we show in the case of $n=6k+7$. By combining (\ref{67_1}), (\ref{67_2}), (\ref{67_3}) (\ref{67_4}) with (\ref{afd1}), (\ref{afd2}), we have 
\begin{eqnarray}
\Phi_{n}\left(\frac{\partial r}{\partial x_{1}}\right)
&=& \begin{pmatrix} 1 & 0 \\ 0 & 1 \end{pmatrix}
- t^{-6k-6}\begin{pmatrix} 0 & 1 \\ 1 & 0 \end{pmatrix}
+ t^{-6k-5}\begin{pmatrix} 1 & 1 \\ 0 & 1 \end{pmatrix}\label{67a1}\\
&=& t^{-6k-6}{\begin{pmatrix} t+t^{6k+6} & 1+t \\ 1 & t+ t^{6k+6} \end{pmatrix}},\nonumber
\end{eqnarray}
\begin{eqnarray}
&&\Phi_{n}\left(\frac{\partial r}{\partial x_{3l+2}}\right)\label{67a2}\\
&=& 
t^{3l}\left\{t^{-6k-6}\begin{pmatrix} 0 & 1 \\ 1 & 0 \end{pmatrix}
- t^{-6k-5}\begin{pmatrix} 1 & 1 \\ 0 & 1 \end{pmatrix}
+ t^{-6k-4}\begin{pmatrix} 1 & 0 \\ 1 & 1 \end{pmatrix}
\right\}\nonumber
\\
&=& t^{3l-6k-6}{\begin{pmatrix} t+t^{2} & 1+t \\ 1+t^{2} & t+t^{2} \end{pmatrix}}\ (l=0,1,\ldots,2k-1),\nonumber
\end{eqnarray}
\begin{eqnarray}
&&\Phi_{n}
\left(\frac{\partial r}{\partial x_{3l+3}}\right)\label{67a3}\\
&=& 
t^{3l}\left\{t^{-6k-5}\begin{pmatrix} 1 & 1 \\ 0 & 1 \end{pmatrix}
- t^{-6k-4}\begin{pmatrix} 1 & 0 \\ 1 & 1 \end{pmatrix}
+ t^{-6k-3}\begin{pmatrix} 0 & 1 \\ 1 & 0 \end{pmatrix}
\right\}\nonumber
\\
&=& t^{3l-6k-5}{\begin{pmatrix} 1+t & 1+t^{2} \\ t+t^{2} & 1+t \end{pmatrix}}\ (l=0,1,\ldots,2k-1),\nonumber
\end{eqnarray}
\begin{eqnarray}
&&\Phi_{n}
\left(\frac{\partial r}{\partial x_{3l+4}}\right)\label{67a4}\\
&=& 
t^{3l}\left\{t^{-6k-4}\begin{pmatrix} 1 & 0 \\ 1 & 1 \end{pmatrix}
- t^{-6k-3}\begin{pmatrix} 0 & 1 \\ 1 & 0 \end{pmatrix}
+ t^{-6k-2}\begin{pmatrix} 1 & 1 \\ 0 & 1 \end{pmatrix}
\right\}\nonumber
\\
&=& t^{3l-6k-4}{\begin{pmatrix} 1+t^{2} & t+t^{2} \\ 1+t & 1+t^{2} \end{pmatrix}}\ (l=0,1,\ldots,2k-1),\nonumber
\end{eqnarray}
\begin{eqnarray}
\Phi_{n}
\left(\frac{\partial r}{\partial x_{6k+2}}\right)
&=& 
t^{-6}\begin{pmatrix} 0 & 1 \\ 1 & 0 \end{pmatrix}
- t^{-5}\begin{pmatrix} 1 & 1 \\ 0 & 1 \end{pmatrix}
+ t^{-4}\begin{pmatrix} 1 & 0 \\ 1 & 1 \end{pmatrix}
\label{67a5}\\
&=& t^{-6}{\begin{pmatrix} t+t^{2} & 1+t \\ 1+t^{2} & t+t^{2} \end{pmatrix}},\nonumber
\end{eqnarray}
\begin{eqnarray}
\Phi_{n}
\left(\frac{\partial r}{\partial x_{6k+3}}\right)
&=& 
t^{-5}\begin{pmatrix} 1 & 1 \\ 0 & 1 \end{pmatrix}
- t^{-4}\begin{pmatrix} 1 & 0 \\ 1 & 1 \end{pmatrix}
+ t^{-3}\begin{pmatrix} 0 & 1 \\ 1 & 1 \end{pmatrix}
\label{67a6}\\
&=& t^{-5}{\begin{pmatrix} 1+t & 1+t^{2} \\ t+t^{2} & 1+t+t^{2} \end{pmatrix}},\nonumber
\end{eqnarray}
\begin{eqnarray}
\Phi_{n}
\left(\frac{\partial r}{\partial x_{6k+4}}\right)
&=& 
t^{-4}\begin{pmatrix} 1 & 0 \\ 1 & 1 \end{pmatrix}
- t^{-3}\begin{pmatrix} 1 & 1 \\ 0 & 1 \end{pmatrix}
+ t^{-2}\begin{pmatrix} 0 & 1 \\ 1 & 1 \end{pmatrix}
\label{67a7}\\
&=& t^{-4}{\begin{pmatrix} 1+t & t+t^{2} \\ 1+t^{2} & 1+t+t^{2} \end{pmatrix}},\nonumber
\end{eqnarray}
\begin{eqnarray}
\Phi_{n}
\left(\frac{\partial r}{\partial x_{6k+5}}\right)
&=& 
t^{-3}\begin{pmatrix} 1 & 1 \\ 0 & 1 \end{pmatrix}
- t^{-2}\begin{pmatrix} 1 & 0 \\ 0 & 1 \end{pmatrix}
+ t^{-1}\begin{pmatrix} 0 & 1 \\ 1 & 0 \end{pmatrix}
\label{67a8}\\
&=& t^{-3}{\begin{pmatrix} 1+t & 1+t^{2} \\ t^{2} & 1+t \end{pmatrix}},\nonumber
\end{eqnarray}
\begin{eqnarray}
\Phi_{n}
\left(\frac{\partial r}{\partial x_{6k+6}}\right)
&=&
t^{-2}\begin{pmatrix} 1 & 0 \\ 0 & 1 \end{pmatrix}
- t^{-1}\begin{pmatrix} 1 & 1 \\ 0 & 1 \end{pmatrix}
+ t^{-6k-7}\begin{pmatrix} 0 & 1 \\ 1 & 1 \end{pmatrix}
\label{67a9}\\
&=& t^{-6k-7}{\begin{pmatrix} t^{6k+5}+t^{6k+6} & 1+t^{6k+5} \\ 1 & 1+t^{6k+5}+t^{6k+6} \end{pmatrix}},\nonumber
\end{eqnarray}
\begin{eqnarray}
\Phi_{n}
\left(\frac{\partial r}{\partial x_{6k+7}}\right)
&=& 
t^{-1}\begin{pmatrix} 1 & 1 \\ 0 & 1 \end{pmatrix}
- \begin{pmatrix} 1 & 0 \\ 0 & 1 \end{pmatrix}
+ t\begin{pmatrix} 0 & 1 \\ 1 & 1 \end{pmatrix}
\label{67a10}\\
&=& t^{-1}{\begin{pmatrix} 1+t & 1+t^{2} \\ t^{2} & 1+t+t^{2} \end{pmatrix}}.\nonumber
\end{eqnarray}
By (\ref{67a2}) and (\ref{67a10}), we see that 
\begin{eqnarray}\label{2-n_2}
\left(\Phi_{n}\left(\frac{\partial r}{\partial x_{2}}\right)\ \Phi_{n}\left(\frac{\partial r}{\partial x_{6k+7}}\right)\right)
&\sim& 
\begin{pmatrix} 
0 & 0 & 1+t & 0 \\ 
0 & 1 & 0 & 0 
\end{pmatrix}
\end{eqnarray}
in the same way as (\ref{2-n}). Then by (\ref{67a1})--(\ref{67a10}) and (\ref{2-n_2}), it is easy to see that
\begin{eqnarray*}
A\left(G_{n},\tilde{\rho}\otimes\tilde{\alpha}_{n}\right)\sim
\begin{pmatrix} 
1+t & 0 & 0 & \cdots & 0\\ 
0 & 1  & 0 & \cdots & 0
\end{pmatrix}.
\end{eqnarray*}
This implies the desired conclusion. 
\end{proof}

By Lemma \ref{trivia} and Theorem \ref{suzuki_twist}, we succeed to show the nontriviality of $\Theta_{n}$ for $n\equiv 1,5\pmod{6}$ by using twisted Alexander ideals.

\section{Handlebody-knots}

A \textit{handlebody-knot} is a handlebody embedded in $S^3$. Two handlebody-knots are said to be \textit{equivalent} if there exists an orientation-preserving self-homeomorphism on $S^3$ which sends one to the other. A \textit{diagram} of a handlebody-knot $H$ is that of a spatial trivalent graph $\Gamma_{H}$ whose regular neighborhood is equivalent to $H$ as a handlebody-knot. A table of genus two handlebody-knots up to six crossings was given in \cite{IshiiKishimotoMoriuchiSuzuki12}. In this section, we evaluate the twisted Alexander ideals for the handlebody-knots in the table. 

For a handlebody-knot $H$, we denote the fundamental group of the exterior of $H$ in $S^{3}$ by $G_{H}$. Since the exterior of $H$ and the exterior of $\Gamma_{H}$ in $S^{3}$ are homeomorphic, it follows that $G_{H}\cong G_{\Gamma_{H}}$. Therefore we can obtain a presentation of $G_{H}$ by taking the Wirtinger presentation for a diagram of $H$. Moreover, the (twisted) Alexander ideals of $H$ is derived from $G_H$ with (a group representation $\rho$ from $G_{H}$ to a matrix group $G$ and) an epimorhism $\alpha$ from $G_H$ to an abelian group $G_0$ as in the case of spatial graphs. Unlike the case of spatial graphs, we cannot specify $\rho$ and $\alpha$, since there is no canonical meridian system for handlebody-knots. To ensure the invariance, we sum up the (twisted) Alexander ideals over all possible ($\rho$ and) $\alpha$. Namely, the collection of the (twisted) Alexander ideals of $G_{H}$ associated with all possible ($\rho$ and) $\alpha$ is an invariant of $H$. For groups $G_{1}$ and $G_{2}$, we denote by $\operatorname{Conj}\left(G_{1},G_{2}\right)$ the set of representative elements of conjugacy classes of homomorphisms from $G_{1}$ to $G_{2}$, and by $\operatorname{Epi}\left(G_{1},G_{2}\right)$ the set of all epimorphisms from $G_{1}$ to $G_{2}$. Then we obtain a handlebody-knot invariant of matrix form 
\[ \left(E_{d}\left(A\left(G_{H},\tilde{\rho}\otimes\tilde{\alpha}\right)\right)\right)_{\rho\in\operatorname{Conj}(G_H,G),\alpha\in\operatorname{Epi}(G_H,G_{0})}, \]
where two matrices are assumed to be the same if one can be transformed to the other by permuting rows and columns.
Set $G = SL\left(2;\mathbb{Z}_2\right)$, $G_0=\langle t\ |\ t^2\rangle$ and $d=4$.
Table \ref{table:data_for_handlebody-knot} lists the invariant of matrix form, where 
\begin{eqnarray*}
\left\{\left(a_{11},a_{12}\ldots,a_{1n}\right)_{l_1},\left(a_{21},a_{22}\ldots,a_{2n}\right)_{l_2},\ldots,\left(a_{m1},a_{m2},\ldots,a_{mn}\right)_{l_m}\right\}
\end{eqnarray*}
indicates the matrix
\[ \begin{pmatrix}
\left.
\begin{array}{cccc}
\left(a_{11}\right) & \left(a_{12}\right) & \ldots & \left(a_{1n}\right) \\
\vdots & \vdots & & \vdots \\
\left(a_{11}\right) & \left(a_{12}\right) & \ldots & \left(a_{1n}\right)
\end{array}
\right\}\text{\scriptsize$l_1$} \\
\vdots\hspace{7mm} \\
\left.
\begin{array}{cccc}
\left(a_{m1}\right) & \left(a_{m2}\right) & \ldots & \left(a_{mn}\right) \\
\vdots & \vdots & & \vdots \\
\left(a_{m1}\right) & \left(a_{m2}\right) & \ldots & \left(a_{mn}\right)
\end{array}
\right\}\text{\scriptsize$l_m$}
\end{pmatrix}. \]
For example,
\[ \{(1,1,1)_2,(0,t+1,t+1)_1,(t+1,0,t+1)_1\}
=\begin{pmatrix}
(1) & (1) & (1) \\
(1) & (1) & (1) \\
(0) & (t+1) & (t+1) \\
(t+1) & (0) & (t+1)
\end{pmatrix}. \]

The second column of Table \ref{table:data_for_handlebody-knot} shows the number of the conjugacy classes of representations of $G_H$ on $SL(2;\mathbb{Z}_2)$ \cite{IshiiKishimotoMoriuchiSuzuki12}. Then, from the table, we see that the invariant of matrix form works better than the number of the conjugacy classes of representations. Although counting representations is easy way to distinguish two handlebody-knots, the evaluation is not easy if the representation space is big. In such case, the invariant discussed in this section may work well with a small representation space. 

\begin{table}[htbp]
\begin{center}
\scalebox{0.8}{
\begin{tabular}{|c||c|l|}
\hline
$H$ & &
\\ \hline
$0_1$ & $11$ & $\{(1,1,1)_{11}\}$
\\ \hline
$4_1$ & $14$ & $\{(0,0,0)_1,(1,1,1)_{10},(0,1+t,1+t)_1,(1+t,0,1+t)_1,(1+t,1+t,0)_1\}$
\\ \hline
$5_1$ & $11$ & $\{(1,1,1)_9,(0,1+t,1+t)_1,(1+t,0,1+t)_1\}$
\\ \hline
$5_2$ & $14$ & $\{(0,0,0)_1,(1,1,1)_{10},(0,1+t,1+t)_1,(1+t,0,1+t)_1,(1+t,1+t,0)_1\}$
\\ \hline
$5_3$ & $11$ & $\{(0,0,0)_1,(1,1,1)_{10}\}$
\\ \hline
$5_4$ & $11$ & $\{(0,0,0)_3,(1,1,1)_8\}$
\\ \hline
$6_1$ & $11$ & $\{(0,0,0)_2,(1,1,1)_9\}$
\\ \hline
$6_2$ & $11$ & $\{(0,0,0)_1,(1,1,1)_{10}\}$
\\ \hline
$6_3$ & $11$ & $\{(1,1,1)_{11}\}$
\\ \hline
$6_4$ & $11$ & $\{(1,1,1)_9,(0,1+t,1+t)_1,(1+t,0,1+t)_1\}$
\\ \hline
$6_5$ & $11$ & $\{(1,1,1)_{10},(0,1+t,1+t)_1\}$
\\ \hline
$6_6$ & $11$ & $\{(1,1,1)_{10},(0,1+t,1+t)_1\}$
\\ \hline
$6_7$ & $11$ & $\{(0,0,0)_1,(1,1,1)_{10}\}$
\\ \hline
$6_8$ & $11$ & $\{(1,1,1)_{11}\}$
\\ \hline
$6_9$ & $14$ & $\{(0,0,0)_3,(1,1,1)_{10},(0,1+t,1+t)_1\}$
\\ \hline
$6_{10}$ & $11$ & $\{(1,1,1)_{11}\}$
\\ \hline
$6_{11}$ & $11$ & $\{(1,1,1)_{11}\}$
\\ \hline
$6_{12}$ & $11$ & $\{(0,0,0)_1,(1,1,1)_9,(0,1+t,1+t)_1\}$
\\ \hline
$6_{13}$ & $14$ & $\{(0,0,0)_1,(1,1,1)_{10},(0,1+t,1+t)_1,(1+t,0,1+t)_1,(1+t,1+t,0)_1\}$
\\ \hline
$6_{14}$ & $17$ & $\{(0,0,0)_9,(1,1,1)_8\}$
\\ \hline
$6_{15}$ & $17$ & $\{(0,0,0)_9,(1,1,1)_8\}$
\\ \hline
$6_{16}$ & $11$ & $\{(0,0,0)_3,(1,1,1)_8\}$
\\ \hline
\end{tabular}
}
\end{center}
\vspace{0.2cm}
\caption{}
\label{table:data_for_handlebody-knot}
\end{table}

\section{surface-links}

A {\it surface-link} is a closed surface locally flatly embedded in $\mathbb{R}^4$. Two surface-links are said to be {\it equivalent} if there exists an orientation-preserving self-homeomorphism on $\mathbb{R}^4$ which sends one to the other. It is well-known that any surface-link can be deformed into a surface-link which has a Morse position with respect to the fourth coordinate and whose maximal and minimal points are in the hyperplane $\mathbb{R}^3\times\{1\}$ and $\mathbb{R}^3\times \{-1\}$, respectively, and hyperbolic points are in the hyperplane $\mathbb{R}^3\times\{0\}$ (cf. \cite{FoxMilnor66, Kamada89, KawauchiShibuyaSuzuki82}). 
The $0$-level cross-section with a ``marking'' of each vertex is a $4$-valent graph which realizes the original surface-link. The diagram is called a {\it ch-diagram} of the surface-link. 
In \cite{Yoshikawa94}, Yoshikawa gave a complete list of surface-links which have ch-diagrams such that the sum of the number of crossings and that of hyperbolic vertices is less than or equal to $10$. The {\it knot group} $G_{F}$ of a surface-link $F$, 
that is the fundamental group of the complement of $F$, can be calculated by using a Wirtinger presentation of $F$, refer to \cite{Fox61} for the computation from motion pictures of surface-links and \cite{CKS04} for the computation from ch-diagrams, and the (twisted) Alexander ideals of $F$ is derived from $G_{F}$ with (a group representation $\rho$ from $G_{F}$ to a matrix group $G$ and) an epimorphism $\alpha$ from $G_H$ to an abelian group $G_0$. 
Table \ref{table:Yoshikawa_data_for_2-link} is Yoshikawa's original table equipped with the information about the knot groups and the first Alexander ideals associated with the abelianizers. 
We correct three mistakes: the knot group of $9_1^{1,-2}$, its first Alexander ideal and the first Alexander ideal of $10_2^{0,-2}$. 
Let $\alpha$ be the homomorphism from $G_{F}$ to $G_{0}$ which sends each Wirtinger generator to $t$. Then 
we obtain a surface-link invariant of matrix form
\begin{eqnarray*}
\left(E_d\left(A\left(G_{F},\tilde{\rho}\otimes\tilde{\alpha}\right)
\right)\right)_{\rho\in\operatorname{Conj}\left(G_H,G\right), d=1,2,\ldots},
\end{eqnarray*}
where two matrices are assumed to be the same if one can be transformed to the other by permuting rows.
Set $G=SL\left(2;\mathbb{Z}_2\right)$ and $G_0=\langle t\ |\ t^2\rangle$. 
Table \ref{table:data_for_2-link} lists this invariant for the surface-links in Yoshikawa's table, where we omit the last consecutive 1's in each pair of parentheses, that is,   
\[
\{ \left(a_{11}, a_{12}, \ldots , a_{1 n_{1}-1},1\right)_{l_1}, \ldots , \left(a_{m1}, a_{m2}, \ldots , a_{m n_{m}-1},1\right)_{l_m}\}
\]
represents 
\[
\{ \left(a_{11}, a_{12},  \ldots , a_{1 n_{1}-1},1, 1, 1, \ldots \right)_{l_1} , \ldots , \left(a_{m1}, a_{m2}, \ldots , a_{m n_m-1},1, 1, 1, \ldots \right)_{l_m}\}
\]
and indicates the matrix with infinite columns as shown in \S4.

Consider the cases of $8_1^{-1,-1}$ and $9_{1}^{1,-2}$.
They have the same first Alexander ideals. 
However they are distinguished 
by comparing the second twisted Alexander ideals of $8_1^{-1,-1}$ for all the group representations to $SL\left(2;\mathbb{Z}_2\right)$
 and that of $9_1^{1,-2}$ for a group representation to $SL\left(2;\mathbb{Z}_2\right)$.
Thus $8_1^{-1,-1}$ and $9_{1}^{1,-2}$ gives an example of  two surface-links which are not distinguished by the first Alexander ideals but distinguished by the twisted Alexander ideals.
Note that they are also distinguished by the $0$th Alexander ideals or the number of group representations to $SL\left(2;\mathbb{Z}_2\right)$, and moreover, the surface-links are clearly different since $8_1^{-1,-1}$ does not have an orientable component but $9_{1}^{1,-2}$ does.

\begin{Remark}
When we have an example of two classical-knots which   are not distinguished by the Alexander ideals but distinguished by twisted Alexander ideals, we can easily construct an example of surface-links which satisfies the same property  by taking the Artin's spinning process \cite{Artin26} of the classical-knots.
It is known that the Kinoshita-Terasaka knot $K_{\rm KT}$ and the Conway knot $K_{\rm C}$ are not distinguished by the Alexander ideals but distinguished by  twisted Alexander ideals with the parabolic representations to $SL\left(2;\mathbb{Z}_7\right)$, see \cite{Wada94}.
Thus the spun $K_{\rm KT}$ and the spun $K_{\rm C}$ gives an example of  two surface-links which are not distinguished by the Alexander ideals but distinguished by twisted Alexander ideals.
\end{Remark}

\begin{table}[htbp]
\begin{center}
\scalebox{0.8}{
\begin{tabular}{|c||l|l|l|}
\hline
$F$ & $\pi(\mathbb R^4 -F)$ & ideal & polynomial  
\\ \hline
$0_1$ & $\mathbb Z$ & $(1)$ & $1$
\\ \hline
$2^1_1$ & $\mathbb Z$ & $(1)$ & $1$
\\ \hline
$2^{-1}_1$ & $\mathbb Z_2$ & $(1)$ & $1$
\\ \hline
$6^{0,1}_1$ & $\mathbb Z\oplus \mathbb Z$ & $(x-1, y-1)$ & $1$
\\ \hline
$7^{0,-2}_1$ & $\langle x, y \,|\, yxyx^{-1} \rangle$ & $(x+1, y-1)$ & $1$
\\ \hline
$8_1$ & $\langle x_1, x_2 \,|\, x_1 x_2 x_1 x_2^{-1} x_1^{-1} x_2^{-1} \rangle$ & $(x^2-x+1)$ & $x^2-x+1$  
\\ \hline
$8^{1,1}_1$ & $\mathbb Z \oplus \mathbb Z$ & $(x-1,y-1)$ & $1$  
\\ \hline
$8^{-1,-1}_1$ & $\langle x, y \,|\, xyxy^{-1}, x^{-2}y^2 \rangle$ & $(x+1, y+1, 2)$ & $1$ 
\\ \hline
$9_1$ & $\langle x_1, x_2 \,|\, x_1 x_2^{-1} x_1 x_2 x_1^{-1} x_2^{-1} \rangle$ & $(x-2)$ & $x-2$
\\ \hline
$9^{0,1}_1$ & $\langle x, y \,|\, x^{-1}y^{-1} xy x^{-1} yx y^{-1} \rangle$ & $((x-1)(y-1), (y-1)^2)$ & $y-1$
\\ \hline
$9^{1,-2}_1$ & $\langle x, y \,|\, xyxy^{-1}, x^2 \rangle$ & $(x+1, y+1,2 )$ & $1$ 
\\ \hline
$10_1$ & $\langle x_1, x_2 \,|\, x_1^{-1} x_2 x_1 x_2^{-1} x_1 x_2 x_1^{-1}x_2^{-1} x_1 x_2^{-1} \rangle$ & $(x^2-3x+1)$ & $x^2-3x+1$
\\ \hline
$10_2$ & $\langle x_1, x_2 \,|\, x_1 x_2 x_1 x_2^{-1} x_1^{-1} x_2^{-1},  x_1^{2} x_2 x_1^{-2} x_2^{-1} \rangle$ & $(x+1,3)$ & $1$
\\ \hline
$10_3$ & $\langle x_1, x_2 \,|\, x_1 x_2 x_1 x_2^{-1} x_1^{-1} x_2^{-1},  x_1^{3} x_2 x_1^{-3} x_2^{-1} \rangle$ & $(x^2+x+1,2)$ & $1$
\\ \hline
$10^1_1$ & $\langle x_1, x_2 \,|\, x_1 x_2 x_1 x_2^{-1} x_1^{-1} x_2^{-1} \rangle$ & $(x^2 -x +1)$ & $x^2 -x +1$ 
\\ \hline
$10^{0,1}_1$ & $\langle x,y \,|\, x^{-1} y^{-1} x^{-1} yxyxy^{-1}\rangle$ & $((x-1)(xy+1), (y-1)(xy+1))$& $xy+1$
\\ \hline
$10^{0,1}_2$ & $\langle x,y \,|\, x^2 y x^{-2} y^{-1}\rangle$ & $((x-1)(x+1),(y-1)(x+1))$ & $x+1$
\\ \hline
$10^{1,1}_1$ & $\mathbb Z \oplus \mathbb Z$ & $(x-1, y-1)$ & $1$
\\ \hline
$10^{0,0,1}_1$ & $\langle x,y,z \,|\, y^{-1} x^{-1} zxy z^{-1} \rangle$ & $(0)$ & $0$ 
\\ \hline
$10^{0,-2}_1$ & $\langle x,y \,|\, x^{-1} y^{-1} x y x^{-1} yxy \rangle$ & $(2x+y-1,4)$ & $1$
\\ \hline
$10^{0,-2}_2$ & $\langle x,y \,|\, xy^2 x^{-1} y^{-2}, y x^{-1} y^{-1} xy x^{-1} yx \rangle$ & $(2x+y-1, 4, 2x^2 +2)$  & $1$ 
\\ \hline
$10^{-1,-1}_1$ & $\langle x,y \,|\, x^2 y^2, yxyxyx^{-1} y^{-1} x^{-1} \rangle $ & $(x+1, y+1, 4)$ & $1$
\\ \hline
$10^{-2,-2}_1$ & $\langle x, y \,|\, xyxy^{-1}, x^{-2}y^2 \rangle$ & $(x+1, y+1, 2)$ & $1$ 
\\ \hline
\end{tabular}
}
\end{center}
\vspace{0.2cm}
\caption{}
\label{table:Yoshikawa_data_for_2-link}
\end{table}

\begin{table}
\begin{center}
\scalebox{0.8}{
\begin{tabular}{|c||l|}
\hline
$F$ & invariant
\\ \hline
$0_1$ & $\{(0,1)_3\}$
\\ \hline
$2^1_1$ & $\{(0,1)_3\}$
\\ \hline
$2^{-1}_1$ & $\{(1)_1,(1+t,1)_1\}$
\\ \hline
$6^{0,1}_1$ & $\{(0,1)_4,(0,0,1)_1,(0,1+t,1)_2,(0,0,1+t,1)_1\}$
\\ \hline
$7^{0,-2}_1$ & $\{(0,1)_2,(0,0,1)_1,(0,1+t,1)_2,(0,0,1+t,1)_1\}$
\\ \hline
$8_1$ & $\{(0,1)_2,(0,0,1)_1,(0,0,1+t,1)_1\}$
\\ \hline
$8^{1,1}_1$ & $\{(0,1)_4,(0,0,1)_1,(0,1+t,1)_2,(0,0,1+t,1)_1\}$
\\ \hline
$8^{-1,-1}_1$ & $\{(0,1)_3,(0,0,1+t,1)_1\}$
\\ \hline
$9_1$ & $\{(0,1)_4\}$
\\ \hline
$9^{0,1}_1$ & $\{(0,1)_4,(0,0,0,1)_2,(0,0,1+t,1)_3\}$
\\ \hline
$9^{1,-2}_1$ & $\{(0,1)_3,(0,1+t,1)_1,(0,0,1+t,1)_1\}$
\\ \hline
$10_1$ & $\{(0,1)_2,(0,0,1+t,1)_1\}$
\\ \hline
$10_2$ & $\{(0,1)_4\}$
\\ \hline
$10_3$ & $\{(0,1)_4\}$
\\ \hline
$10^1_1$ & $\{(0,1)_2,(0,0,1)_1,(0,0,1+t,1)_1\}$
\\ \hline
$10^{0,1}_1$ & $\{(0,1)_3,(0,0,1)_2,(0,0,0,1)_3,(0,0,1+t,1)_2\}$
\\ \hline
$10^{0,1}_2$ & $\{(0,1)_3,(0,0,1)_2,(0,0,0,1)_2,(0,0,1+t,1)_3\}$
\\ \hline
$10^{1,1}_1$ & $\{(0,1)_4,(0,0,1)_1,(0,1+t,1)_2,(0,0,1+t,1)_1\}$
\\ \hline
$10^{0,0,1}_1$ & $\{(0,0,0,1)_{16},(0,0,0,0,1)_4,(0,0,0,1+t,1)_8,(0,0,0,0,1+t,1)_3\}$
\\ \hline
$10^{0,-2}_1$ & $\{(0,0,1)_2,(0,0,1+t,1)_3\}$
\\ \hline
$10^{0,-2}_2$ & $\{(0,0,1)_2,(0,0,1+t,1)_3\}$
\\ \hline
$10^{-1,-1}_1$ & $\{(0,1)_1,(0,1+t,1)_2,(0,0,1+t,1)_1\}$
\\ \hline
$10^{-2,-2}_1$ & $\{(0,1)_3,(0,0,1+t,1)_1\}$
\\ \hline
\end{tabular}
}
\end{center}
\vspace{0.2cm}
\caption{}
\label{table:data_for_2-link}
\end{table}

\section*{Acknowledgment}

The authors are grateful to Professor Charles Livingston and Professor Kokoro Tanaka for their valuable comments.

\end{document}